\renewcommand\footnotetextcopyrightpermission[1]{} % removes footnote with conference information in first column
\newcommand{\newjointcountertheorem}[3]{
	\newaliascnt{#1}{#2}
	\newtheorem{#1}[#1]{#3}
	\aliascntresetthe{#1}	
}
\newtheorem{thm}{Theorem}[section]
\theoremstyle{definition}
\def\Snospace~{\S{}}
\newcommand{\TermAa}{U_{a,b}}
\newcommand{\TermAb}{T_{a,b}}
\newcommand{\LBLS}{\mathrm{LS}_{\geq}}
\newcommand{\LBKZ}{\mathrm{KZ}_{\geq}}
\newcommand{\UBLS}{\mathrm{LS}^{\leq}}
\newcommand{\UBL}{\mathrm{L}^{\leq}}
\begin{document}

	\fancyhead{}		% No headers

	\title{Krawtchouk polynomials and quadratic semi-regular sequences}
	%\titlenote{Produces the permission block, and copyright information}
	%\subtitle{Extended Abstract}
	%\subtitlenote{The full version of the author's guide is available as \texttt{acmart.pdf} document}

        \author{Stavros Kousidis}
        \orcid{0000-0002-6947-4963}
        \affiliation{%
          \institution{Federal Office for Information Security}
          \streetaddress{Godesberger Allee 185--189}
          \city{Bonn}
          \country{Germany}
          \postcode{53175}}
        \email{st.kousidis@googlemail.com}
  
	\begin{abstract}
    		We derive lower und upper bounds on the degree of regularity of an
    		overdetermined, zero-dimensional and homogeneous quadratic semi-regular
    		system of polynomial equations.
		The analysis is based on the interpretation of the associated Hilbert
		series as the truncation of the generating function of values of a
		certain family of orthogonal polynomials, the Krawtchouk polynomials.
	\end{abstract}
    
	\keywords{Groebner bases; Semi--regular sequences; Degree of regularity; Hilbert regularity; Orthogonal polynomials; Krawtchouk polynomials}
 
 	\settopmatter{printfolios=true}
	
  	\maketitle
  	
  	\section{Introduction}

            	Semi-regular sequences model generic homogeneous systems of
            	polynomial equations as a generalization of regular sequences
            	to the overdetermined case. They were designed to be algebraically
            	independent, i.e. to have as few algebraic relations between them
            	as possible, in order to assess the complexity of Faug{\`e}re's
            	Gr{\"o}bner basis algorithm F5 \citep{Faugere2002}. The essential
            	complexity parameter in that assessment is the degree of regularity,
            	which is built in to the design of semi-regular sequences as a
            	threshold up to which algebraic independence is maintained.
            	
            	The degree of regularity of a semi-regular sequence essentially
            	coincides with its Hilbert regularity, and can be computed by the
            	power series expansion of a rational function and its truncation
            	at the first non-positive coefficient. Asymptotic estimates of the
            	degree of regularity via the	analysis of this rational function by
            	the saddle-point method of asymptotic analysis have been given by
            	Bardet et al. in \cite{Bardet2004, BFS2004, BFSY2005, BFS2003}.
            	
%           	Through experimental evidence it is conjectured that the proportion of semi-regular sequences tends to $1$ as the number of variables tends to infinity \cite{BFS2004,BFSY2005}. Note that recently Hodges et al.~\cite{HMS2017} established certain results on the existence and non-existence of semi-regular sequences.
                
            	We follow a different approach to the degree of regularity in that we
            	interpret the Hilbert series as the truncation of the generating function
            	of values of a certain family of orthogonal polynomials,
            	the Krawtchouk polynomials \cite{Krawtchouk1929}.
            	This will enable us to give various descriptions of the degree of
            	regularity based on information about the location of extreme roots
            	of the Krawtchouk polynomials. 
            	In particular, we will derive lower and upper bounds on the degree
            	of regularity without any further restrictions on the systems we consider.
            	That is, for any overdetermined, zero-dimensional and homogeneous
            	quadratic semi-regular system $f_1,\ldots,f_m \in \mathbf{K}[X_1,\ldots,X_n]$
            	of polynomial equations with degree of regularity denoted by $d_{reg}$,
            	we establish the lower bounds
            	\begin{align*}
            		d_{reg} \geq
            		\begin{cases}
            			1 + \left\lfloor \tfrac 12 \left( 2m-n - 2 \sqrt{m(m-n)} \right) \right\rfloor & \mbox{Thm. \ref{thm:bound-regularity-kz}} , \\[0.5em]
            			1 + \left\lfloor \tfrac 12 \left( w_4^6 -1 \right) \right\rfloor & \mbox{Thm. \ref{thm:bound-regularity-ls}} ,
            		\end{cases}
            	\end{align*}
            	where $w_4$ is the unique positive real root of the quartic polynomial
            	\begin{align*}
            		 	q(w) = w^4 - \frac{n}{\sqrt{2(2m-n)}} w - 6^{- \frac 13} i_1 & \hspace{0.3cm}\mbox{(with $i_1 \approx 3.37213$)} .
            	\end{align*}
            	Furthermore, for such $f_1,\ldots,f_m$ we prove the upper bounds
            	\begin{align*}
            		d_{reg} \leq
            		\begin{cases}
            			1 + \left\lceil \tfrac 12 \left( 2m-n+3 - \sqrt{(2m-n+1)^2 - 4n^2} \right) \right\rceil & \mbox{Thm. \ref{thm:upper-bound-regularity-ls}}, \\[0.5em]
            			1 + \left\lceil x_5^3 \right\rceil & \mbox{Thm. \ref{thm:upper-bound-regularity-l}}, \\
            		\end{cases}
            	\end{align*}
            	where $x_5$ is a particular positive real root of the sextic polynomial
            \[
            		s(x) = x(x-1)^2(2m-n-x^3) - \tfrac 14 n^2 .
            \]
        		While the lower bounds are valid for any $m>n$, the existence of the upper
        		bounds depend on the conditions $0 \leq (2m-n+1)^2 - 4n^2$, and
        		$0 \leq \max_{x > 1}(s(x))$ along with
        		$x_5^3 \leq \lfloor (2m-n)/2 \rfloor$, respectively,
        		which we will explain in detail.
		
            	The article is organized as follows. In \autoref{sec:hilbert-krawtchouk}
            	we give a short introduction to semi-regular sequences and Krawtchouk
            	polynomials, and explain the connection between them. In
            	\autoref{sec:root-krawtchouk-hilbert-regularity} we relate the degree of
            	regularity to the smallest root of Krawtchouk polynomials and translate
            	information about the location of the smallest root to the degree of
            	regularity. This involves an exact description of the degree of regularity
            	as an eigenvalue problem as well as the translation of bounds. Since the
            	eigenvalue problem seems to be intractable we focus on lower
            	and upper bounds for the smallest root of Krawtchouk polynomials that
            	are known to the literature, and derive the above claims in
            	\autoref{sec:lower-bound-krasikov-zarkh},
            	\autoref{sec:lower-bound-levenshtein-szegoe},
            	\autoref{sec:upper-bound-levenshtein-szegoe},
            	\autoref{sec:upper-bound-levenshtein}.
            	We conclude in \autoref{sec:values} with concrete
            	values and comparisons for illustration purposes.
	
	\section{Semi-regular sequences and Krawtchouk polynomials}
	\label{sec:hilbert-krawtchouk}
  
                Let $f_1,\ldots,f_m \in \mathbf{K}[X_1,\ldots,X_n]$ be a system of polynomial equations where
                $\mathbf{K}$ is a field. We assume the system $f_1,\ldots,f_m$ to be zero--dimensional, overdetermined
                and homogeneous quadratic, that is the graded commutative algebra $S = \mathbf{K}[X_1,\ldots,X_n] / (f_1,\ldots,f_m)$ 
                is finite--dimensional, $m>n$ and the degree of each $f_i$ is $2$. We will adopt the usual notation for
                graded algebras and ideals, that is $S = \oplus_{j \geq 0} S_j$ and for an ideal $I < S$ generated by
                homogeneous elements $I = \oplus_{j \geq 0} I_j$.
  
		Now, according to Bardet \cite{Bardet2004}, Bardet et al.~\cite{BFS2004,BFSY2005},
		Diem \cite{Diem2015} and Hodge et al. \cite{HMS2017} such a system
		$f_1,\ldots,f_m$ of polynomial equations is defined to be a
		\emph{semi-regular sequence} when the multiplication with any $f_i$ is
		injective in the graded algebra
		$S(i-1) = \mathbf{K}[X_1,\ldots,X_n] / (f_1,\ldots,f_{i-1})$ up to a
		certain degree. To be precise,
		$f_1,\ldots,f_m$ is semi-regular if the multiplication map 
 		\begin{align*}
  		%\label{eq:semi-regular}
  			\begin{split}
  				S(i-1)_j & \longrightarrow S(i-1)_{j+2} \\
  				g & \longmapsto g f_i
  			\end{split}
  		\end{align*}
		is injective for each $i=1,\ldots,m$ and $j < d_{reg} - 2$ where
		$d_{reg}$ is the \emph{degree of regularity}
		of the graded ideal $J = (f_1,\ldots,f_m)$ given by
  		\begin{align*}
  		%\label{eq:regularity-degree}
  			d_{reg} = \min \left\{ d \geq 0 \mbox{ : } \dim_{\mathbf{K}} J_d = \dim_{\mathbf{K}} \mathbf{K}[X_1,\ldots,X_n]_d \right\} .
			%d_{reg} = \min \left\{ d \geq 0 \mbox{ } : \mbox{ } \dim_{\mathbf{K}}(f_1,\ldots,f_m)_d = \dim_{\mathbf{K}} \mathbf{K}[X_1,\ldots,X_n]_d = \binom{n+d-1}{d} \right\} .
  		\end{align*}
  		By \cite[Proposition 5 (i)]{BFSY2005} and \cite[Theorem 2.3 (d)]{HMS2017} the polynomial system $f_1,\ldots,f_m$ is semi-regular if and only if the Hilbert
		series of $S = \mathbf{K}[X_1,\ldots,X_n] / (f_1,\ldots,f_m)$ is 
 		\begin{align*}
  			\mathrm{HS}_S (z) = \left| \frac{(1-z^2)^m}{(1-z)^n}\right|_+ = \left| (1-z)^{m-n}(1+z)^m \right|_+ .
 		 \end{align*}
  		Here, $|\sum_{k \geq 0} a_k z^k|_+$ means truncation at the first non-positive coefficient. That is,
  		\begin{align*}
   			\left| \sum_{k \geq 0} a_k z^k \right|_+ = \sum_{ \{ k \mbox{ } : \mbox{ } \forall_{l\leq k}(a_l>0)\} } a_k z^k .
  		\end{align*}
  		As noted in \cite[Proposition 5 (iii)]{BFSY2005} 	the degree of regularity
  		$d_{reg}$ of a semi-regular sequence
		$f_1,\ldots,f_m$ is the index of the first non-positive coefficient of
		$(1-z)^{m-n}(1+z)^m$, i.e.
  		\begin{equation}
  		\label{eq:hilbert-regularity}
    			d_{reg}(f_1,\ldots,f_m) = 1 + \deg \left({\mathrm{HS}_S (z)} \right) ,
  		\end{equation}
		and consequently coincides with the Hilbert regularity of the graded algebra $S$.
  		The degree of regularity is of great interest in the field of polynomial systems solving, since for semi-regular
		sequences the complexity of Faug{\`e}re's F5 algorithm \cite{Faugere2002} for the computation of a Gr{\"o}bner
		basis can be bounded by \cite[Proposition 5 (iv)]{BFSY2005}
 		\begin{align*}
  		%\label{eq:groebner-complexity}
			\mathcal{O} \left( m \cdot d_{reg} \cdot \binom{n+d_{reg}-1}{d_{reg}}^\omega \right) ,
  		\end{align*}
  		where $\omega < 2.373$ is the exponent in the complexity of matrix multiplication.
  		The expansion of the polynomial $(1-z)^{m-n}(1+z)^m$ allows the computation of the regularity for concrete
		instances when $m$ and $n$ are fixed. In particular, its $k$-th coefficient for $k=0,\ldots,2m-n$ is %in bracket notation \cite{Knuth1994}
  		\begin{align*}
  		%\label{eq:coefficient-alternating-sum}
  	 		[z^k](1-z)^{m-n}(1+z)^m = \sum_{j=0}^{k} (-1)^j \binom{m-n}{j} \binom{m}{k-j} .
 		\end{align*}
  		The alternating summation makes this explicit formula combinatorially unstable. That is, from this description
		it is virtually impossible to establish meaningful conditions on $k$ that imply $[z^k](1-z)^{m-n}(1+z)^m > 0$.
  
  		An alternative approach to the coefficients is to understand the polynomial $(1-z)^{m-n}(1+z)^m$ as being
		the ordinary generating function of values of binary Krawtchouk polynomials at certain integers
		(see \eqref{eq:ogf-krawtchouk-binary}). To recall those polynomials, we follow Levenshtein's exposition
		\cite[(2)]{Levenshtein1995} (see also \cite{Krasikov1999SurveyOB}) and denote by
  		\begin{align*}
 		%\label{eq:krawtchouk}
  			K_k^{N, r}(t) = \sum_{j=0}^{k} (-1)^j (r-1)^{k-j} \binom{t}{j} \binom{N - t}{k-j}
  		\end{align*}
  		the (general) Krawtchouk polynomial of degree $k$ for $k=0,\ldots,N$. From this one can deduce
		the ordinary generating function \cite[(43)]{Levenshtein1995}:
  		\begin{equation}
  		\label{eq:ogf-krawtchouk-general}
  			(w-z)^x (w+(r-1)z)^{N-x} = \sum_{k=0}^N K_k^{N,r}(x) \cdot z^k w^{N-k}.
  		\end{equation}
		The Krawtchouk polynomials are discrete orthogonal polynomials associated to the binomial distribution
		via the orthogonality relation \cite[Corollary 2.3]{Levenshtein1995}
  		\begin{align*}
  			\sum_{i=0}^{N} K_l^{N,r}(i) K_k^{N,r}(i) (r-1)^i \binom{N}{i}= r^N(r-1)^l\binom{N}{l}\delta_{l,k}
  		\end{align*}
  		that holds for any $l,k=0,\ldots,N$. Here, $\delta_{l,k}$ denotes the Kronecker symbol.
  		They can be computed from the recurrence
		relation \cite[Corollary 3.3]{Levenshtein1995} 
 		\begin{align}
		\label{eq:krawtchouk-recurrence}
	  		\begin{split}
  				(k+1) K_{k+1}^{N,r}(t)	& = (N(r-1)-k(r-2)-rt)K_k^{N,r}(t) \\
  									& \hspace{0.96cm} - (r-1)(N-k+1)K_{k-1}^{N,r}(t) .
  			\end{split}
 		\end{align}
  		For our purposes we will only consider the binary Krawtchouk polynomials, that is $r=2$, and drop
		this parameter to simplify the notation. Then, the ordinary generating function \eqref{eq:ogf-krawtchouk-general}
		simplifies to 
  		\begin{equation}
  		\label{eq:ogf-krawtchouk-binary}
  			(1-z)^{m-n} (1+z)^m = \sum_{k=0}^{2m-n} K_k^{2m-n}(m-n) \cdot z^k.
  		\end{equation}
  		Let us compute some binary Krawtchouk polynomials (Cf. \autoref{fig:few-krawtchouk}).
  		\begin{align}
  		\label{eq:few-krawtchouk}
 			\begin{split}
  				K_1^{2m-n}(t)	& = 2m-n -2t \\
  				K_2^{2m-n}(t)	& = \tfrac 12 \left[ (K_1^{2m-n}(t))^2 -(2m-n) \right] \\
				%K_2^{2m-n}(t) & = \tfrac 12 \left[ (K_1^{2m-n}(t))^2 -(2m-n) \right] \\
				K_3^{2m-n}(t)	& = \tfrac 16 \left[ (K_1^{2m-n}(t))^3 -(3(2m-n)-2)(K_1^{2m-n}(t)) \right] \\
				K_4^{2m-n}(t)	& = \tfrac 1{24} \big[ (K_1^{2m-n}(t))^4 - (6(2m-n)-8)(K_1^{2m-n}(t))^2 \\
				 			& \hspace{1cm} + 3(2m-n-2)(2m-n) \big]
				%K_3^{2m-n}(t) & = \tfrac 16 \left[ (2m-n -2t)^3 -(3(2m-n)-2)(2m-n-2t) \right] \\
				%K_4^{2m-n}(t) & = \tfrac 1{24} \big[ (2m-n -2t)^4 - (6(2m-n)-8)(2m-n-2t)^2 \\
				%			  & \hspace{1cm} + 3(2m-n-2)(2m-n) \big]
			\end{split}
  		\end{align}
  		
  		\begin{figure}[ht!]
			\centering
			\includegraphics[scale=0.42]{./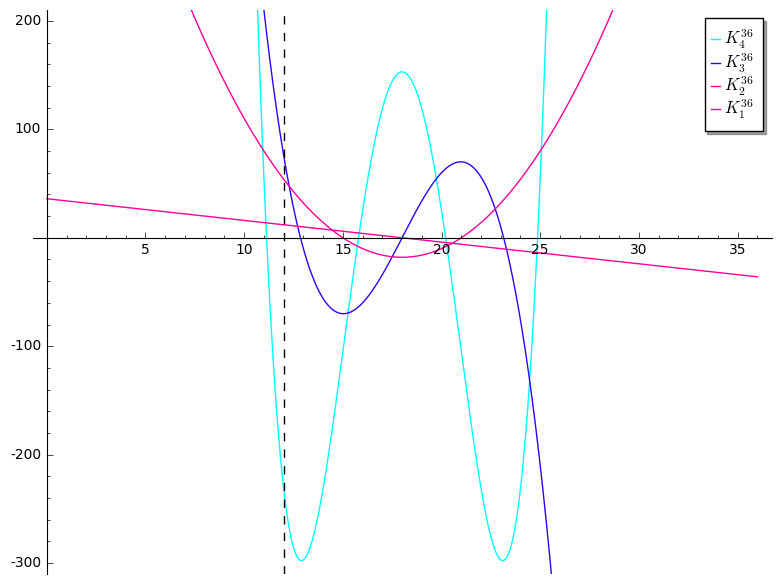}
			\Description{Some members of the family $K_k^{2m-n}(t)$ for $m=24$
					 and $n=12$. The dashed line intersects the polynomials at
					 their values at $t=12$, i.e. the first few coefficients of
					 the generating function $(1-z)^{12}(1+z)^{24} = \sum_{k=0}^{36} K_k^{36}(12) \cdot z^k$.}
			\caption{Some members of the family $K_k^{2m-n}(t)$ for $m=24$
					 and $n=12$. The dashed line intersects the polynomials at
					 their values at $t=12$, i.e. the first few coefficients of
					 the generating function
					 $(1-z)^{12}(1+z)^{24} = \sum_{k=0}^{36} K_k^{36}(12) \cdot z^k$.}
			\label{fig:few-krawtchouk}
  		\end{figure}
  		
  		For further illustration we evaluate the above computed polynomials at $t = m-n$.
  		\begin{align*}
  				K_1^{2m-n}(m-n)	& = n \\
				K_2^{2m-n}(m-n)	& = \tfrac 12 \left[ n^2 + n - 2m \right] \\
				K_3^{2m-n}(m-n)	& = \tfrac 16 \left[ n^3 + 3n^2 + 2n - 6mn \right] \\
				K_4^{2m-n}(m-n)	& = \tfrac 1{24} \big[ n^4 + 6n^3 + (11-12m)n^2 +(6-12m)n \\
								& \hspace{1cm} +12m(m-1) \big]
  		\end{align*}
  
  		It is still challenging to unfold the recurrence relation \eqref{eq:krawtchouk-recurrence} in
		order to predict $k$ such that $[z^k](1-z)^{m-n}(1+z)^m > 0$. 
  		However, relation \eqref{eq:ogf-krawtchouk-binary} allows a description of the degree of
		regularity \eqref{eq:hilbert-regularity} via roots of binary Krawtchouk polynomials as we will
		explain in \autoref{sec:root-krawtchouk-hilbert-regularity}.

	\section{Roots of Krawtchouk polynomials and the degree of regularity}
  	\label{sec:root-krawtchouk-hilbert-regularity}

  We collect some properties of roots of orthogonal polynomials.
  \begin{thm}[\protect{Cf. \cite[Theorem 3.3.1, Theorem 3.3.2]{s1975}}]
  \label{thm:roots-krawtchouk}
  	Let $d_{k}^{N}(1), \ldots , d_{k}^{N}(k)$ denote the roots of the binary Krawtchouk polynomial $K_k^N$ where $k=1,\ldots,2m-n$. We have,
  	\begin{enumerate}
  		\item the roots of $K_k^N$ are real, distinct and are located in the interior of the interval $[0,N]$, i.e.~without loss of generality they are ordered as $0< d_{k}^{N}(1) < d_{k}^{N}(2) < \ldots < d_{k}^{N}(k) <N$.
  		\item the roots of $K_k^N$ and $K_{k+1}^N$ interlace, i.e. for $k=1,\ldots,N-1$ and $j=1,\ldots,k$ we have $d_{k+1}^{N}(j) < d_{k}^{N}(j) < d_{k+1}^{N}(j+1)$.
  	\end{enumerate}
  \end{thm}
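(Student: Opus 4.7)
The plan is to adapt the classical Szegő argument for orthogonal polynomials to the discrete orthogonality of the Krawtchouk family recorded in \autoref{sec:hilbert-krawtchouk}. For part~(1), I would argue by contradiction that $K_k^N$ has at least $k$ sign changes on the support $\{0,1,\ldots,N\}$. Suppose instead it has only $j<k$ such changes, located between consecutive support points indexed by $n_1<\cdots<n_j$. Setting $p(t)=\prod_{i=1}^j(t-n_i-\tfrac12)$ yields a polynomial of degree $j<k$ for which $p(i)\cdot K_k^N(i)$ has constant sign on $\{0,\ldots,N\}$, so
\[ \sum_{i=0}^N p(i)\,K_k^N(i)\binom{N}{i}\neq 0. \]
But $p$ lies in the linear span of $K_0^N,\ldots,K_j^N$, and the orthogonality relation recalled in \autoref{sec:hilbert-krawtchouk} forces this sum to vanish --- a contradiction. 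Hence $K_k^N$ has at least $k$ sign changes, each placing a real root in some gap $(n_i,n_i+1)\subset(0,N)$; since $\deg K_k^N=k$, these are all of its roots, which are therefore real, simple, and strictly inside $(0,N)$.

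For part~(2), the three-term recurrence \eqref{eq:krawtchouk-recurrence} with $r=2$ becomes
\[ (k+1)K_{k+1}^N(t) = (N-2t)K_k^N(t) - (N-k+1)K_{k-1}^N(t), \]
with the coefficient $N-k+1>0$ for $1\leq k\leq N$. Evaluating at any root $\xi$ of $K_k^N$ gives $(k+1)K_{k+1}^N(\xi)=-(N-k+1)K_{k-1}^N(\xi)$, so $K_{k+1}^N$ and $K_{k-1}^N$ take opposite signs at each such $\xi$. Proceeding by induction on $k$ (the base case $k=1$ being immediate from the recurrence and the sign of the leading coefficient of $K_2^N$), $K_{k-1}^N$ has exactly one root between any two consecutive roots of $K_k^N$ and so changes sign there; consequently $K_{k+1}^N$ also changes sign between those two roots, producing $k-1$ roots of $K_{k+1}^N$ strictly interlaced with the interior roots of $K_k^N$.

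To place the remaining two roots of $K_{k+1}^N$ below $d_k^N(1)$ and above $d_k^N(k)$, I would compare signs at the boundary: the leading coefficient of $K_k^N$ is $(-2)^k/k!$, so $K_{k+1}^N$ and $K_k^N$ take opposite signs as $t\to+\infty$, while part~(1) already confines all $k+1$ roots of $K_{k+1}^N$ to $(0,N)$. A sign count on $(0,d_k^N(1))$ and on $(d_k^N(k),N)$ then forces exactly one additional root in each outer interval, completing the strict interlacing. I expect the main obstacle to be the bookkeeping in part~(1): constructing the separator polynomial $p$ correctly from the discrete sign pattern (in particular handling the possibility that $K_k^N$ vanishes at some support point) and verifying that $p(i)\,K_k^N(i)$ remains sign-constant on all of $\{0,\ldots,N\}$. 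Once part~(1) is secured, the recurrence-driven interlacing in part~(2) is essentially mechanical sign chasing.
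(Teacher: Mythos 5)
The paper offers no proof of this theorem; it is imported verbatim from Szeg\H{o} \cite{s1975}, so there is no internal argument to compare against. Your reconstruction is correct. Part~(1) is exactly Szeg\H{o}'s sign-change argument specialized to the discrete measure: for $r=2$ the weight $(r-1)^i\binom{N}{i}=\binom{N}{i}$ in the orthogonality relation recalled in \autoref{sec:hilbert-krawtchouk} is positive on $\{0,\dots,N\}$, which is all the argument needs, and the bookkeeping you flag is handled in the standard way (a ``sign change'' is a pair of support points $i<i'$ with $K_k^N(i)K_k^N(i')<0$ and $K_k^N$ vanishing at the points strictly between; one separator factor per such pair, and the resulting weighted sum is nonzero because $K_k^N$, having degree $k\leq N$, cannot vanish at all $N+1$ support points). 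For part~(2) you take the recurrence-based route rather than the Christoffel--Darboux route of the textbook proof; both are classical, and yours is sound: the specialization of \eqref{eq:krawtchouk-recurrence} to $r=2$ is correct, $N-k+1>0$ gives the sign flip at each root of $K_k^N$, the leading coefficient $(-2)^k/k!$ is right, and the endpoint count closes the argument. The only point worth making explicit is that $K_{k+1}^N$ and $K_k^N$ share no root --- immediate from the recurrence, since a common root would propagate down to $K_0^N\equiv 1$ --- which is what licenses the strictness of the interlacing inequalities.
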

  
	The interlacing property allows to relate the degree of regularity of semi-regular sequences to the roots of binary Krawtchouk polynomials. In fact, this is the essential observation of this article.
	
	\begin{lem}
	\label{lem:regularity-smallest-root}
		Let $f_1,\ldots,f_m \in \mathbf{K}[X_1,\ldots,X_n]$ be an overdetermined, zero-dimensional and homogeneous quadratic semi-regular sequence. The degree of regularity $d_{reg}$ of $f_1,\ldots,f_m$ is given by
		\begin{align*}
			d_{reg} & = 1 + \max \left\{ k \mbox{ } : \mbox{ } d_{k}^{2m-n}(1) > m-n \right\},
		\end{align*}
		where $d_{k}^{2m-n}(1)$ denotes the smallest root of $K_k^{2m-n}$ for each $k=1,\ldots , 2m-n$.
	\end{lem}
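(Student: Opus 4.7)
The plan is to unfold the Hilbert series/truncation description of $d_{reg}$ into a coefficient-positivity statement on $(1-z)^{m-n}(1+z)^m$, and then translate the sign of $K_k^{2m-n}(m-n)$ to the location of the smallest root $d_k^{2m-n}(1)$. Combining \eqref{eq:hilbert-regularity}, the definition of the truncation $|\cdot|_+$, and the Krawtchouk expansion \eqref{eq:ogf-krawtchouk-binary} immediately rewrites the claim as
\[
d_{reg} \;=\; 1 + \max\bigl\{\, k \,:\, K_l^{2m-n}(m-n) > 0 \text{ for all } l \leq k \,\bigr\},
\]
so what remains is to show that this maximum equals $K^{\ast} := \max\{k : d_k^{2m-n}(1) > m-n\}$.

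First I would invoke \autoref{thm:roots-krawtchouk}(2): the interlacing inequality $d_{k+1}^{2m-n}(1) < d_k^{2m-n}(1)$ forces the sequence of smallest roots to be strictly decreasing in $k$, so $\{k : d_k^{2m-n}(1) > m-n\}$ is in fact an initial segment $\{1,\ldots,K^{\ast}\}$. The segment is non-empty because $d_1^{2m-n}(1) = (2m-n)/2 > m-n$ whenever $m>n$. For any $k \leq K^{\ast}$ all $k$ roots of $K_k^{2m-n}$ lie in $(m-n, 2m-n)$ by \autoref{thm:roots-krawtchouk}(1) together with the choice of $K^{\ast}$; since moreover $K_k^{2m-n}(0) = \binom{2m-n}{k} > 0$ and $K_k^{2m-n}$ has no zero in $[0,m-n]$, continuity forces $K_k^{2m-n}(m-n) > 0$. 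This supplies one inclusion.

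For the reverse direction I would analyse the critical index $k = K^{\ast}+1$: by definition $d_k^{2m-n}(1) \leq m-n$, so at least one root of $K_k^{2m-n}$ lies in $(0,m-n]$, while interlacing gives $d_k^{2m-n}(2) > d_{K^{\ast}}^{2m-n}(1) > m-n$, so at most one root lies there. Hence exactly one root of $K_k^{2m-n}$ sits in $(0,m-n]$, and following the sign from the positive value $K_k^{2m-n}(0) = \binom{2m-n}{k}$ through this single simple sign change yields $K_k^{2m-n}(m-n) \leq 0$ (with equality precisely when $d_k^{2m-n}(1) = m - n$). Since the truncation $|\cdot|_+$ stops at the first non-positive coefficient, it terminates at index $K^{\ast}$, and therefore $d_{reg} = 1 + K^{\ast}$. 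The only mildly delicate point is the last paragraph — ruling out that several roots have already slipped past $m-n$ at the critical index — and this is exactly what the interlacing part of \autoref{thm:roots-krawtchouk} delivers.
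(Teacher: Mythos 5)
Your proof is correct, and while it shares the paper's overall framework (rewriting $d_{reg}$ via \eqref{eq:hilbert-regularity} and \eqref{eq:ogf-krawtchouk-binary} as a positivity condition on the values $K_k^{2m-n}(m-n)$, then exploiting interlacing together with $K_k^{2m-n}(0)=\binom{2m-n}{k}>0$), you handle the crucial converse inclusion by a genuinely different and, in my view, cleaner argument. The paper argues by contradiction: it takes a minimal $k$ with $K_l^{2m-n}(m-n)>0$ for all $l\leq k$ but $d_k^{2m-n}(1)\leq m-n$, observes that an even number $e$ of roots must then lie below $m-n$, and counts roots of $K_{k-1}^{2m-n}$ in the $e-1$ intervals between consecutive small roots to contradict either the sign of $K_{k-1}^{2m-n}(m-n)$ or the minimality of $k$. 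You instead go straight to the critical index $K^{\ast}+1$ and use the single interlacing inequality $d_{K^{\ast}+1}^{2m-n}(1)<d_{K^{\ast}}^{2m-n}(1)<d_{K^{\ast}+1}^{2m-n}(2)$ to conclude that exactly one root has crossed $m-n$, so the non-positivity of $K_{K^{\ast}+1}^{2m-n}(m-n)$ is immediate; this buys you a direct proof with no parity bookkeeping. One small point deserves a sentence: you implicitly assume $K^{\ast}+1\leq 2m-n$ so that $K_{K^{\ast}+1}^{2m-n}$ is actually a member of the family. This is easily justified --- if $K^{\ast}=2m-n$, your first inclusion would make every coefficient of $(1-z)^{m-n}(1+z)^m$ positive, contradicting that these coefficients sum to $(1-1)^{m-n}2^m=0$ for $m>n$ --- but it should be stated.
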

	\begin{proof}
		Because of the interlacing property from \autoref{thm:roots-krawtchouk} we have the following strictly decreasing sequence of smallest roots of the polynomials $K_1^{2m-n}, \ldots , K_{2m-n}^{2m-n}$.
		\[
			d_{2m-n}^{2m-n}(1) < \ldots < d_{k+1}^{2m-n}(1) < d_{k}^{2m-n}(1) < \ldots < d_{1}^{2m-n}(1)
		\]
		Hence, $d_{k}^{2m-n}(1) > m-n$ implies $K_l^{2m-n}(m-n) > 0$ for all $l \leq k$.  Conversely assume $K_l^{2m-n}(m-n) > 0$ for all $l \leq k$ and $d_{k}^{2m-n}(1) \leq m-n$. Since $K_k^{2m-n}(0) = \binom{2m-n}{k} > 0$ and the roots are distinct, there must be an even number $e$ such that
		\[
			d_{k}^{2m-n}(1) < \ldots < d_{k}^{2m-n}(e) < m-n \leq d_{k}^{2m-n}(e+1) .
		\]
		We choose a minimal such $k$ and note that $k > 1$ since $K_1^{2m-n}(t) = 2m-n -2t$ (see \eqref{eq:few-krawtchouk}) and $d_1^{2m-n}(1) = \tfrac12 (2m-n) > m-n$ .
		By the interlacing property each interval
		\[
			[d_{k}^{2m-n}(1), d_{k}^{2m-n}(2)], \ldots , [d_{k}^{2m-n}(e-1),d_{k}^{2m-n}(e)]
		\]
		contains exactly one root of $K_{k-1}^{2m-n}$. Since $e$ is even, the number of those intervals is odd and since $K_{k-1}^{2m-n}(0) = \binom{2m-n}{k-1} > 0$ we have either $K_{k-1}^{2m-n}(m-n) \leq 0$ that contradicts the initial assumption, i.e.~$K_l^{2m-n}(m-n) > 0$ for all $l \leq k$, or we have $d_{k}^{2m-n}(e) < d_{k-1}^{2m-n}(e) < m-n \leq d_{k}^{2m-n}(e+1)$ which contradicts the minimality of $k$.
		Therefore,
		\[
			\{k\mbox{ }:\mbox{ }\forall_{l\leq k}( K_l^{2m-n}(m-n)>0) \}
			=
			\{k \mbox{ }:\mbox{ } d_{k}^{2m-n}(1) > m-n) \} ,
		\]
		and for $S = \mathbf{K}[X_1,\ldots,X_n]/(f_1,\ldots,f_m)$ we have
		\begin{align*}
			\mathrm{HS}_S (z) & = \left| (1-z)^{m-n}(1+z)^m \right|_+ \\
			   & = \sum_{\{k\mbox{ }:\mbox{ }\forall_{l\leq k}( K_l^{2m-n}(m-n)>0) \}} K_k^{2m-n}(m-n) \cdot z^k \\
			   & = \sum_{\{k \mbox{ }:\mbox{ } d_{k}^{2m-n}(1) > m-n) \}} K_k^{2m-n}(m-n) \cdot z^k .
		\end{align*}
		In particular, $\deg \left({\mathrm{HS}_S (z)} \right) = \max \left\{ k \mbox{ } : \mbox{ } d_{k}^{2m-n}(1) > m-n \right\}$.
	\end{proof}
	
	By \autoref{lem:regularity-smallest-root} it is clear, that any useful expression for the smallest roots of binary Krawtchouk polynomials yields a description of the degree of regularity of semi-regular sequences. Levenshtein \cite{Levenshtein1995} proves an expression based on the maximization of a quadratic form that we recollect.
	
	\begin{thm}[\protect{Cf. \cite[Theorem 6.1]{Levenshtein1995}}]
	\label{thm:smallest-root-quadratic-form}
		Let $d_{k}^{2m-n}(1)$ denote the smallest root of $K_k^{2m-n}$ for each $k=1,\ldots , 2m-n$. Then,
		\begin{align*}
			d_{k}^{2m-n}(1)  = \frac{2m-n}{2} - \max_{||w||_2^2 = 1} \left( \sum_{i=0}^{k-2} w_i w_{i+1} \sqrt{(i+1)(2m-n-i)} \right) .
			%2  d_{k}^{2m-n}(1)  = 2m-n - \max_{||w||_2^2 = 1} \left( 2\sum_{i=0}^{k-2} w_i w_{i+1} \sqrt{(i+1)(2m-n-i)} \right) .
		\end{align*}
	\end{thm}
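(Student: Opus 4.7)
The plan is to realize the smallest root of $K_k^{2m-n}$ as the minimum eigenvalue of a symmetric tridiagonal Jacobi matrix, and then invoke the Courant--Fischer min-max principle. Write $N = 2m-n$ throughout.

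First, I would pass from $K_k^N$ to the rescaled polynomials $p_k(t) := K_k^N(t)/\sqrt{\binom{N}{k}}$; by the orthogonality relation stated in \autoref{sec:hilbert-krawtchouk} these all share the common squared norm $2^N$, so substituting into the recurrence \eqref{eq:krawtchouk-recurrence} (specialized to $r=2$), solving for $t\, K_k^N(t)$, and clearing the binomial ratios $\binom{N}{k\pm 1}/\binom{N}{k}$ turns the asymmetric three-term recurrence into the symmetric one
\begin{align*}
t\, p_k(t) = \tfrac{N}{2}\, p_k(t) - \alpha_{k+1}\, p_{k+1}(t) - \alpha_{k}\, p_{k-1}(t), \qquad \alpha_{j} := \tfrac{1}{2}\sqrt{j(N-j+1)}.
\end{align*}

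Next, I would read off the associated Jacobi matrix: stacking the values $p_0(t), \ldots, p_{k-1}(t)$ into a vector $\mathbf{v}(t)$ and writing the symmetric recurrence at indices $0, 1, \ldots, k-1$ yields $t\, \mathbf{v}(t) = J_k\, \mathbf{v}(t) - \alpha_k\, p_k(t)\, \mathbf{e}_{k-1}$, where $J_k$ is the real symmetric $k \times k$ tridiagonal matrix with constant diagonal $N/2$ and sub-/super-diagonal entries $-\alpha_1,\ldots,-\alpha_{k-1}$. At any root $t^*$ of $p_k$ the inhomogeneity vanishes while $p_0(t^*) = 1$, so $\mathbf{v}(t^*)$ is a nonzero eigenvector of $J_k$ with eigenvalue $t^*$; combined with the distinctness of the $k$ roots provided by \autoref{thm:roots-krawtchouk}, the roots of $K_k^N$ coincide with the spectrum of $J_k$.

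The Rayleigh--Ritz characterization then gives the smallest root as $\min_{\|w\|_2 = 1} w^\top J_k w$. Expanding the quadratic form produces $N/2$ from the diagonal and $-2 \sum_{i=0}^{k-2} \alpha_{i+1} w_i w_{i+1}$ from the off-diagonal, and converting $\min$ to $-\max$ together with the identity $2 \alpha_{i+1} = \sqrt{(i+1)(N-i)}$ yields the claimed formula upon restoring $N = 2m-n$. The work is mostly bookkeeping: one must track signs consistently so that the Jacobi off-diagonals come out as $-\alpha_k$, and confirm that the binomial ratios collapse exactly to $\sqrt{j(N-j+1)}$. The only substantive input beyond elementary algebra is the standard Jacobi-matrix description of roots of orthogonal polynomials together with the Rayleigh--Ritz principle.
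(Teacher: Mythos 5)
Your proposal is correct: the symmetrization of the recurrence \eqref{eq:krawtchouk-recurrence} (at $r=2$, using the common norm $2^N$ of the rescaled $p_k$), the identification of the roots of $K_k^{2m-n}$ with the spectrum of the resulting Jacobi matrix, and the Rayleigh--Ritz step all check out, including the index bookkeeping $2\alpha_{i+1}=\sqrt{(i+1)(2m-n-i)}$. Note that the paper offers no proof of this statement at all --- it imports it from \cite[Theorem 6.1]{Levenshtein1995} --- so your argument is a self-contained derivation of the cited result along the standard lines, and it is consistent with (indeed, it explains) the eigenvalue reformulation the paper carries out in \autoref{lem:regularity-eigenvalue}.
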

	
	This allows to describe the determination of the degree of regularity of a semi-regular sequence as an eigenvalue problem.
	
	\begin{lem}
	\label{lem:regularity-eigenvalue}
		Let $f_1,\ldots,f_m \in \mathbf{K}[X_1,\ldots,X_n]$ be as in \autoref{lem:regularity-smallest-root}.
		The degree of regularity $d_{reg}$ of $f_1,\ldots,f_m$ is given by
		\begin{align*}
			d_{reg} & = 1 + \max \left\{ k \mbox{ } : \mbox{ } \lambda_{k}^{2m-n} < n \right\},
		\end{align*}
		where $\lambda_{k}^{2m-n}$ denotes the largest eigenvalue of the real symmetric tridiagonal matrix $A_k^{2m-n} \in \mathbf{R}^{k \times k}$ with non-zero entries only on the super- und subdiagonal as follows
		 \begin{align*}
		 (A_k^{2m-n})_{ij} & = \sqrt{(i+1)(2m-n-i)} & \mbox{for $|i-j| = 1$} , \\
		 (A_k^{2m-n})_{ij} & = 0 & \mbox{otherwise} ,
		 \end{align*}
		 with $i,j=0,\ldots,k-1$ and $k=1,\ldots,2m-n$.
	\end{lem}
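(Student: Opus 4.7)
The plan is to recognize the quadratic form in \autoref{thm:smallest-root-quadratic-form} as a Rayleigh quotient of the tridiagonal matrix $A_k^{2m-n}$ and then translate the inequality $d_{k}^{2m-n}(1) > m-n$ from \autoref{lem:regularity-smallest-root} into the eigenvalue inequality $\lambda_k^{2m-n} < n$.

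First I would expand $w^T A_k^{2m-n} w$ for a vector $w = (w_0, \ldots, w_{k-1})$. Since $A_k^{2m-n}$ is symmetric with the only non-zero entries $(A_k^{2m-n})_{i,i+1} = (A_k^{2m-n})_{i+1,i} = \sqrt{(i+1)(2m-n-i)}$ for $i=0,\ldots,k-2$, the quadratic form unfolds to
\begin{align*}
	w^T A_k^{2m-n} w = 2 \sum_{i=0}^{k-2} w_i w_{i+1} \sqrt{(i+1)(2m-n-i)} .
\end{align*}
So the bracketed maximum in \autoref{thm:smallest-root-quadratic-form} is exactly $\tfrac 12 \max_{\|w\|_2^2=1} w^T A_k^{2m-n} w$.

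Next I would invoke the Rayleigh--Ritz principle for real symmetric matrices, which gives $\max_{\|w\|_2^2 = 1} w^T A_k^{2m-n} w = \lambda_k^{2m-n}$, the largest eigenvalue of $A_k^{2m-n}$. Combined with \autoref{thm:smallest-root-quadratic-form} this yields the identity
\begin{align*}
	d_{k}^{2m-n}(1) = \frac{2m-n}{2} - \frac{1}{2} \lambda_k^{2m-n} .
\end{align*}
A direct manipulation then shows that $d_{k}^{2m-n}(1) > m - n$ is equivalent to $\lambda_k^{2m-n} < n$.

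Finally, substituting this equivalence into the conclusion of \autoref{lem:regularity-smallest-root} gives $d_{reg} = 1 + \max\{ k : \lambda_k^{2m-n} < n \}$, as claimed. The only subtlety worth flagging is the bookkeeping of the factor $\tfrac 12$ arising from the symmetry of $A_k^{2m-n}$ (since each off-diagonal entry contributes twice to $w^T A_k^{2m-n} w$); once this is handled correctly, the proof reduces to the standard variational characterization of the largest eigenvalue and an elementary rearrangement of inequalities.
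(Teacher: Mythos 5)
Your proposal is correct and follows essentially the same route as the paper: both identify the quadratic form of \autoref{thm:smallest-root-quadratic-form} with $\tfrac 12 w^T A_k^{2m-n} w$, apply the variational characterization of the largest eigenvalue to get $d_{k}^{2m-n}(1) = \tfrac 12\left(2m-n-\lambda_k^{2m-n}\right)$, and rearrange the threshold inequality from \autoref{lem:regularity-smallest-root}. The only cosmetic difference is that the paper starts from the superdiagonal matrix $\tilde{A}$ and symmetrizes it, while you expand the symmetric matrix directly; the factor $\tfrac 12$ bookkeeping you flag is exactly the point the paper handles the same way.
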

	
		\begin{proof}
		This is a reformulation of \autoref{lem:regularity-smallest-root} via \autoref{thm:smallest-root-quadratic-form} and standard linear algebra. That is,
		\[
			2 \cdot d_{k}^{2m-n}(1) = 2m-n - 2 \cdot \max_{||w||_2^2 = 1} \left( w^t \tilde{A} w \right)
		\]
		with $\tilde{A} \in \mathbf{R}^{k \times k}$ being non-zero on the superdiagonal as follows
		\begin{align*}
		 (\tilde{A})_{ij} & = \sqrt{(i+1)(2m-n-i)} & \mbox{for $j-i = 1$} , \\
		 (\tilde{A})_{ij} & = 0 & \mbox{otherwise} ,
		 \end{align*}
		 with $i,j=0,\ldots,k-1$.
		 We can replace $\tilde{A}$ by the symmetric matrix $\tfrac 12 (\tilde{A} + \tilde{A})^t = \tfrac 12 A_k^{2m-n}$, where $A_k^{2m-n}$ is given in the formulation of \autoref{lem:regularity-smallest-root} above, without changing the quadratic form and obtain
		 \[
			2 \cdot d_{k}^{2m-n}(1) = 2m-n - 2 \cdot \max_{||w||_2^2 = 1} \left( w^t \tfrac 12 A_k^{2m-n} w \right) = 2m - n - \lambda_{k}^{2m-n}
		\]
		where $\lambda_{k}^{2m-n}$ denotes the largest eigenvalue of $A_k^{2m-n}$. Consequently, by \autoref{lem:regularity-smallest-root}
		\begin{align*}
		  d_{reg} & = 1 + \max \left\{ k \mbox{ } : \mbox{ } d_{k}^{2m-n}(1) > m-n \right\} \\
		  		& = 1 + \max \left\{ k \mbox{ } : \mbox{ } 2m - n - \lambda_{k}^{2m-n} > 2(m-n) \right\} \\
		  		& = 1 + \max \left\{ k \mbox{ } : \mbox{ } \lambda_{k}^{2m-n} < n \right\}
		  		\qedhere
		\end{align*}
	\end{proof}
	The tridiagonal matrix of \autoref{lem:regularity-eigenvalue} is a Golub-Kahan matrix \cite{GB1965}. It appears that no explicit formul{\ae} for the eigenvalues of such a matrix are known. Some general results on the explicit computation of eigenvalues of tridiagonal matrices are given by Kouachi \cite{Kouachi2006}. Unfortunately those results do not apply to our matrix.
	
	Instead of producing an exact expression for the degree of regularity of semi-regular sequences, our \autoref{lem:regularity-smallest-root} allows us to immediately translate lower and upper bounds for the smallest root of binary Krawtchouk polynomials into bounds for the degree of regularity.
	
	\begin{lem}
	\label{lem:regularity-bounds}
		Let $f_1,\ldots,f_m \in \mathbf{K}[X_1,\ldots,X_n]$ be as in \autoref{lem:regularity-smallest-root} with degree of regularity $d_{reg}$. Then,
		\begin{align*}
			d_{reg} & \geq 1 + \max \left\{ k \mbox{ } : \mbox{ } \mathrm{LB}_{k}^{2m-n}(1) > m-n \right\} ,\\
			d_{reg} & \leq 1 + \min \left\{ k \mbox{ } : \mbox{ } \mathrm{UB}_{k}^{2m-n}(1) < m-n \right\} ,
		\end{align*}
		where $\mathrm{LB}_{k}^{2m-n}(1)$ and $\mathrm{UB}_{k}^{2m-n}(1)$ are (not necessarily strict) lower and upper bounds, respectively, for the smallest root $d_{k}^{2m-n}(1)$ of the binary Krawtchouk polynomial $K_k^{2m-n}$ for each $k=1,\ldots , 2m-n$.
		If the bounds $\mathrm{LB}_{k}^{2m-n}(1)$ and $\mathrm{UB}_{k}^{2m-n}(1)$ are  indeed strict, then they are allowed to attain the threshold $m-n$, i.e.
		\begin{align*}
			d_{reg} & \geq 1 + \max \left\{ k \mbox{ } : \mbox{ } \mathrm{LB}_{k}^{2m-n}(1) \geq m-n \right\} ,\\
			d_{reg} & \leq 1 + \min \left\{ k \mbox{ } : \mbox{ } \mathrm{UB}_{k}^{2m-n}(1) \leq m-n \right\} .
		\end{align*}
	\end{lem}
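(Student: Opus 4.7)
My plan is to use \autoref{lem:regularity-smallest-root} as the underlying characterization of $d_{reg}$ and to propagate the LB/UB estimates through the monotonicity of the smallest roots that is supplied by the interlacing property in \autoref{thm:roots-krawtchouk}. The first observation is that part~(2) of \autoref{thm:roots-krawtchouk} gives $d_{k+1}^{2m-n}(1) < d_k^{2m-n}(1)$, so the sequence $(d_k^{2m-n}(1))_{k=1}^{2m-n}$ is strictly decreasing in $k$, and hence the set $\{k : d_k^{2m-n}(1) > m-n\}$ featuring in \autoref{lem:regularity-smallest-root} is an initial segment $\{1,\ldots,d_{reg}-1\}$.

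With this initial-segment picture in hand, the lower bound reduces to a set inclusion. If $k$ satisfies $\mathrm{LB}_k^{2m-n}(1) > m-n$, then $d_k^{2m-n}(1) \geq \mathrm{LB}_k^{2m-n}(1) > m-n$, so $k$ lies in the initial segment and $k \leq d_{reg}-1$; taking the maximum over such $k$ and adding $1$ yields the claim. The upper bound proceeds dually: if $\mathrm{UB}_k^{2m-n}(1) < m-n$, then $d_k^{2m-n}(1) \leq \mathrm{UB}_k^{2m-n}(1) < m-n$ puts $k$ outside the initial segment, so $k \geq d_{reg}$; taking the minimum gives $d_{reg} \leq \min\{k : \mathrm{UB}_k^{2m-n}(1) < m-n\}$, which in particular implies the stated bound (with the extra $1+$ on the right hand side).

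For the strict-bound variants the same two steps are rerun, now using $d_k^{2m-n}(1) > \mathrm{LB}_k^{2m-n}(1)$ and $d_k^{2m-n}(1) < \mathrm{UB}_k^{2m-n}(1)$, so that equality with the threshold $m-n$ still forces a strict comparison with $d_k^{2m-n}(1)$ and hence membership (respectively non-membership) in the initial segment. I do not foresee a real obstacle: the proof is essentially a set-containment argument, and the only nontrivial input is the strict monotonicity of $(d_k^{2m-n}(1))_k$ coming from interlacing, which is already available from \autoref{thm:roots-krawtchouk}.
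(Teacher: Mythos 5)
Your proposal is correct and follows essentially the same route as the paper: it reduces everything to the set inclusions around the characterization in \autoref{lem:regularity-smallest-root}, with the strict decrease of $d_k^{2m-n}(1)$ in $k$ (from interlacing) supplying the downward-closedness needed for the upper bound. You spell out the initial-segment structure that the paper leaves implicit, and in doing so you actually obtain the slightly sharper bound $d_{reg} \leq \min\{k : \mathrm{UB}_k^{2m-n}(1) < m-n\}$, which of course implies the stated one.
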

	
	\begin{proof}
		For the first part one has to realize that
		\begin{align*}
		  \left\{ k \mbox{ } : \mbox{ } \mathrm{LB}_{k}^{2m-n}(1) > m-n \right\}
		  \subseteq
		  \left\{ k \mbox{ } : \mbox{ } d_{k}^{2m-n}(1) > m-n \right\} ,
		\end{align*}
		and
		\begin{align*}
		  & \left\{ k \mbox{ } : \mbox{ } d_{k}^{2m-n}(1) > m-n \right\}  \\
		   & \hspace{1cm} \subseteq
		  \left\{ k \mbox{ } : \mbox{ } k \leq \min \{ k' \mbox{ } : \mbox{ } \mathrm{UB}_{k'}^{2m-n}(1) < m-n \} \right\} .
		\end{align*}
		The threshold assertions about strict bounds are obvious.
	\end{proof}
	
	The following (strict) lower bounds on the smallest root of Krawtchouk polynomials have been reported in the literature.
	\begin{lem}[\protect{\cite[Corollary 1]{kz2009}, \cite[(125)]{Levenshtein1995}, \cite[(6.32.6)]{s1975}}]
	\label{lem:smallest-root-lower-bounds}
		Consider the smallest root $d_{k}^{2m-n}(1)$ of the binary Krawtchouk polynomial $K_k^{2m-n}$. Then,
		for $1 \leq k < \tfrac 12 (2m-n)$ Krasikov and Zarkh \cite[Corollary 1]{kz2009} give
		\begin{align}
		\label{eq:smallest-root-lower-bounds-kz}
		\begin{split}
			& d_{k}^{2m-n}(1) > \\
			& \hspace{0.2cm} \tfrac 12 (2m-n) - \sqrt{k(2m-n-k)} \left( 1 - \frac{3}{2} \left( \frac{2m-n-2k}{2k(2m-n-k)} \right)^{\frac{2}{3}} \right) .
		\end{split}
		\end{align}
		Furthermore, for each $k=1,\ldots , 2m-n$ Levenshtein \cite[(125)]{Levenshtein1995} in
		combination with an upper bound on the largest root $h_k$ of the Hermite polynomial $H_k(X)$
		described by Szeg{\H o} \cite[6.32.6]{s1975} gives
		\begin{align}
		\label{eq:smallest-root-lower-bounds-ls}
		\begin{split}
			& d_{k}^{2m-n}(1) > \\
			& \hspace{0.2cm} \tfrac 12 (2m-n) - \sqrt{\tfrac{1}{2}(2m-n)} \left( \sqrt{2k+1} - 6^{- \frac 13} i_1 (2k+1)^{- \frac 16} \right) ,
		\end{split}
		\end{align}
		where $i_1 < i_2 < i_3 < \cdots$ are the real zeroes of the Airy's function $\mathcal{A}(x)$ that is a solution of the ordinary differential equation $y'' + \tfrac 13 xy = 0$ (see \cite[\S1.81]{s1975}). Note that $i_1 \approx 3.37213$ and $6^{- \frac 13} i_1 \approx 1.85575$ \cite[(6.32.7)]{s1975}.
	\end{lem}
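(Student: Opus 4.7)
The plan is to assemble the two bounds from the cited sources; both are essentially already in the stated form in the literature, so the proof is a bookkeeping exercise rather than an independent argument.

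For the Krasikov--Zarkh bound \eqref{eq:smallest-root-lower-bounds-kz}, I would directly invoke \cite[Corollary 1]{kz2009} with $N = 2m-n$. That corollary is stated for the smallest root of the binary Krawtchouk polynomial of degree $k$ in the range $1 \leq k < N/2$, and after renaming $N$ the displayed inequality is immediate; the restriction $1 \leq k < \tfrac 12 (2m-n)$ comes directly from the hypothesis of the corollary.

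For the Levenshtein--Szeg{\H o} bound \eqref{eq:smallest-root-lower-bounds-ls}, the plan has two stages. First, I would invoke Levenshtein's inequality \cite[(125)]{Levenshtein1995}, which bounds $d_k^{2m-n}(1)$ from below by $\tfrac 12 (2m-n) - \sqrt{\tfrac 12 (2m-n)} \, h_k$, where $h_k$ denotes the largest root of the Hermite polynomial $H_k$ in Szeg{\H o}'s normalization. Second, I would substitute Szeg{\H o}'s upper bound \cite[(6.32.6)]{s1975}
\[
	h_k < \sqrt{2k+1} - 6^{- \frac 13} i_1 (2k+1)^{- \frac 16}
\]
into that inequality. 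Since $\sqrt{\tfrac 12 (2m-n)} > 0$ the direction of the estimate is preserved under multiplication, and distributing the factor through the bracketed difference produces exactly the expression in \eqref{eq:smallest-root-lower-bounds-ls}. Strictness of both bounds is inherited from the strictness of the two inputs, and the range $k = 1, \ldots, 2m-n$ coincides with the range of validity of Szeg{\H o}'s Hermite bound.

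The main, and really only, obstacle is verifying that the normalizations are aligned across the three references: the Krawtchouk polynomials of \cite{Levenshtein1995} must match the binary polynomials $K_k^{2m-n}$ used throughout this paper, the Hermite polynomials $H_k$ appearing in Levenshtein's (125) must be those of Szeg{\H o}'s (6.32.6), and the Airy zero $i_1$ (with $i_1 \approx 3.37213$) must agree in sign convention with \cite[\S1.81]{s1975}. Once this compatibility is checked the two quoted estimates compose without further computation to yield the stated bound.
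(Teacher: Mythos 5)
Your proposal is correct and matches the paper's treatment: the lemma is proved purely by assembling the cited results, namely quoting Krasikov--Zarkh's Corollary~1 with $N=2m-n$ for the first bound, and composing Levenshtein's (125) with Szeg{\H o}'s upper bound (6.32.6) on the largest Hermite root for the second, exactly as you describe. The paper offers no further argument beyond these citations, so your normalization-checking caveat is the only substantive content either proof needs.
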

	
	We also consider the following (strict) upper bounds on the smallest root of Krawtchouk polynomials.
	
	\begin{lem}[\protect{\cite[(6.25)]{Levenshtein1983}, \cite[(124)]{Levenshtein1995}, \cite[(6.2.14)]{s1975}}]
	\label{lem:smallest-root-upper-bounds}
		Consider the smallest root $d_{k}^{2m-n}(1)$ of the binary Krawtchouk polynomial $K_k^{2m-n}$.
		Then, for each $k=1,\ldots , 2m-n$ Levenshtein \cite[(124)]{Levenshtein1995} in combination with
		a lower bound on the largest root $h_k$ of the Hermite polynomial $H_k(X)$ described
		by Szeg{\H o} \cite[(6.2.14)]{s1975} gives
		\begin{align}
		\label{eq:smallest-root-upper-bounds-ls}
			d_{k}^{2m-n}(1) < \tfrac 12 (2m-n) - \tfrac 12 \sqrt{(2m-n-k+2)(k-1)} .
		\end{align}
		Furthermore, for $1 \leq k \leq \tfrac 12 (2m-n)$ Levenshtein \cite[(6.25)]{Levenshtein1983} (Cf. \cite[(74)]{Krasikov1999SurveyOB}) gives
		\begin{align}
		\label{eq:smallest-root-upper-bounds-l}
			d_{k}^{2m-n}(1) < \tfrac 12 (2m-n) - \left( k^{\frac 12}-k^{\frac 16} \right) \sqrt{2m-n-k} .
			%d_{k}^{2m-n}(1) < \tfrac 12 (2m-n) - \sqrt{k(2m-n-k)} + k^{\frac 16}\sqrt{2m-n-k} .
		\end{align}
	\end{lem}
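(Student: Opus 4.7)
The statement is attributed to specific results in \cite{Levenshtein1983,Levenshtein1995,s1975}, so the proof is essentially verificational: I would combine the cited formul\ae{} and check the algebraic consequence. The plan splits into the two inequalities.

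For \eqref{eq:smallest-root-upper-bounds-ls}, the strategy parallels the one used for the matching lower bound \eqref{eq:smallest-root-lower-bounds-ls}, but in the opposite direction. I would first recall Levenshtein's formula (124) from \cite{Levenshtein1995}, which gives an upper bound of the shape
\[
   d_k^{2m-n}(1) < \tfrac12(2m-n) - \Phi\bigl(h_k,\,k,\,2m-n\bigr),
\]
where $h_k$ is the largest root of the Hermite polynomial $H_k(X)$ and $\Phi$ is a monotonically increasing function of $h_k$. Because the cited form of (124) is monotone in $h_k$, substituting a lower bound for $h_k$ preserves the inequality and yields an explicit upper bound. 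For the lower bound on $h_k$ I would invoke Szeg\H{o}'s inequality \cite[(6.2.14)]{s1975}. Plugging this lower bound on $h_k$ into Levenshtein's (124) and simplifying should collapse the expression to the compact radical $\tfrac12\sqrt{(2m-n-k+2)(k-1)}$ asserted in \eqref{eq:smallest-root-upper-bounds-ls}, valid for the full range $k=1,\ldots,2m-n$.

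For \eqref{eq:smallest-root-upper-bounds-l}, the bound is exactly equation (6.25) in \cite{Levenshtein1983} (equivalently (74) in Krasikov's survey \cite{Krasikov1999SurveyOB}), stated for $1 \leq k \leq \tfrac12(2m-n)$. The proof here is therefore a direct citation; I would only state the bound as it appears in the reference and note that the restricted range is needed to make the expression $k^{1/2}-k^{1/6}$ meaningful as a factor against $\sqrt{2m-n-k}$.

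The main obstacle, and the only nontrivial work, is the algebraic simplification in the first part: matching the normalization conventions used by Levenshtein for Krawtchouk and by Szeg\H{o} for Hermite polynomials, and then verifying that Szeg\H{o}'s lower bound on $h_k$ substituted into (124) really reduces to the clean radical $\tfrac12\sqrt{(2m-n-k+2)(k-1)}$ with no hidden lower order terms. Once this bookkeeping is settled, the lemma follows essentially by quotation.
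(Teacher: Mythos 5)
Your proposal is correct and matches the paper's treatment: the paper offers no proof of this lemma beyond the citations themselves, exactly as you describe --- \eqref{eq:smallest-root-upper-bounds-ls} is obtained by substituting Szeg\H{o}'s lower bound on the largest Hermite root $h_k$ into Levenshtein's (124) (valid since the bound is decreasing in $h_k$), and \eqref{eq:smallest-root-upper-bounds-l} is a direct quotation of Levenshtein's (6.25). The bookkeeping you flag (normalization conventions and the reduction to the radical $\tfrac 12 \sqrt{(2m-n-k+2)(k-1)}$) is indeed the only substantive check, and the paper leaves it implicit.
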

	
	\autoref{fig:plot-krawtchouk-lower-bound-root} illustrates the lower, and \autoref{fig:plot-krawtchouk-upper-bound-root} additionally illustrates the upper bounds in a family of binary Krawtchouk polynomials. We will treat each of those bounds seperately to derive the corresponding bounds on the degree of regularity.
	
	Note that there are further bounds present in the literature \cite{Area2015, paschoaetal, jooste_jordaan_2014} that apply to binary Krawtchouk polynomials.
	The results in \cite[Theorem 3.2]{jooste_jordaan_2014} give the upper bound $d_{k}^{2m-n}(1) <  \tfrac 12 (2m-n)$ and hence no extra information.
	The bounds established in \cite[Corollary 5.2]{paschoaetal} coincide with \eqref{eq:smallest-root-lower-bounds-ls}. The bounds given in \cite[Theorem 5.1 and Corollary 5.1]{paschoaetal} and \cite[Theorem 1]{Area2015} will be subject to future research.
  
  \begin{figure}[ht!]
	\centering
	\includegraphics[scale=0.42]{./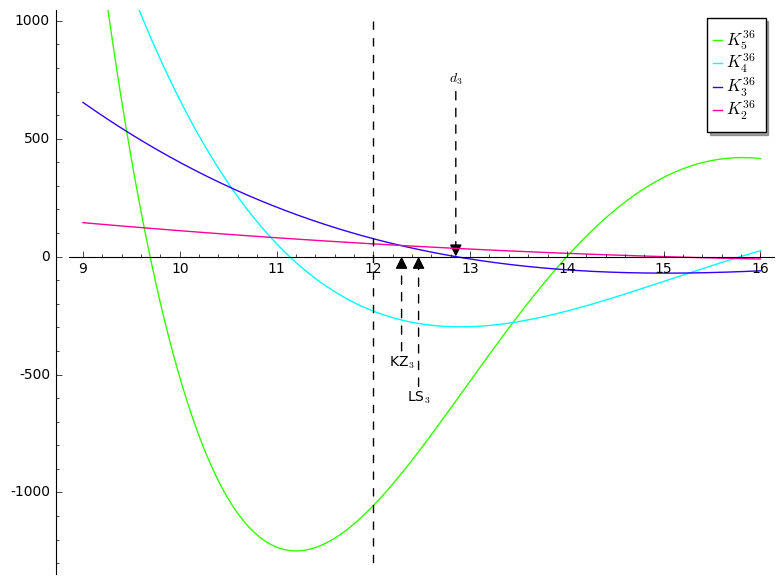}
	\Description{Members of the family $K_k^{36}$ associated to the generating function $(1-z)^{12}(1+z)^{24} = \sum_{k=0}^{36} K_k^{36}(12) \cdot z^k$. The plot shows that $K_4^{36}$ evaluates negative at $12$ whereas $K_3^{36}(12)$ is positive. The first root of $K_3^{36}$ is $d_3^{36}(1) \approx 12.85$. The lower bound on $d_3^{36}(1)$ by Krasikov and Zarkh is $\mathrm{KZ}_3 \approx 12.29$, Levenshtein and Szeg{\H o} report $\mathrm{LS}_3 \approx 12.47$.}
	\caption{Members of the family $K_k^{36}$ associated to the generating function $(1-z)^{12}(1+z)^{24} = \sum_{k=0}^{36} K_k^{36}(12) \cdot z^k$. The plot shows that $K_4^{36}$ evaluates negative at $12$ whereas $K_3^{36}(12)$ is positive. The first root of $K_3^{36}$ is $d_3^{36}(1) \approx 12.85$. The lower bound on $d_3^{36}(1)$ by Krasikov and Zarkh is $\mathrm{KZ}_3 \approx 12.29$, Levenshtein and Szeg{\H o} report $\mathrm{LS}_3 \approx 12.47$.}
	\label{fig:plot-krawtchouk-lower-bound-root}
  \end{figure}

  \begin{figure}[ht!]
	\centering
	\includegraphics[scale=0.42]{./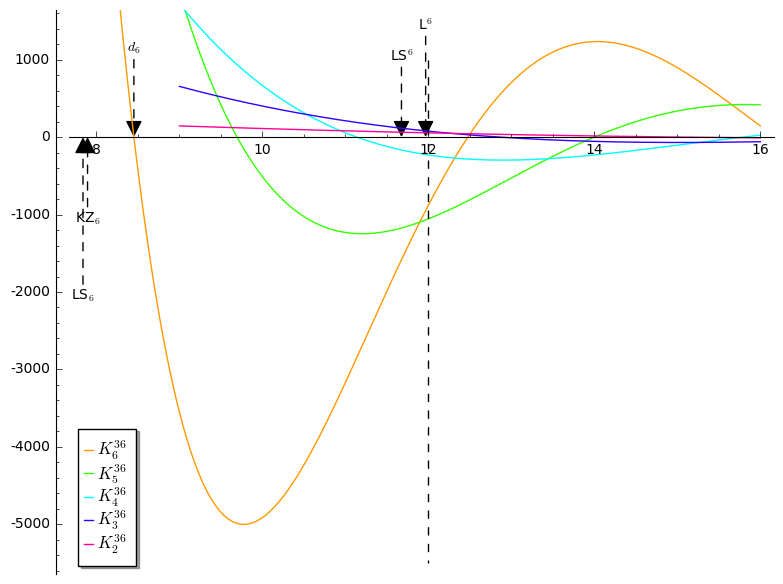}
	\Description{Members of the family $K_k^{36}$ associated to the generating function $(1-z)^{12}(1+z)^{24} = \sum_{k=0}^{36} K_k^{36}(12) \cdot z^k$. The plot shows that $K_4^{36}, K_5^{36}, K_6^{36}$ evaluate negative at $12$ whereas $K_3^{36}(12)$ is positive. The first upper bounds in that family that are below $12$ are those on $d_6^{36}(1)$, the first root of $K_6^{36}$. Now, $d_6^{36}(1) \approx 8.45$. Levenshtein and Szeg{\H o} report $\mathrm{LS}^6 \approx 11.68$, Levenshtein's upper bound is $\mathrm{L}^6 \approx 11.97$.}
	\caption{Members of the family $K_k^{36}$ associated to the generating function $(1-z)^{12}(1+z)^{24} = \sum_{k=0}^{36} K_k^{36}(12) \cdot z^k$. The plot shows that $K_4^{36}, K_5^{36}, K_6^{36}$ evaluate negative at $12$ whereas $K_3^{36}(12)$ is positive. The first upper bounds in that family that are below $12$ are those on $d_6^{36}(1)$, the first root of $K_6^{36}$. Now, $d_6^{36}(1) \approx 8.45$. Levenshtein and Szeg{\H o} report $\mathrm{LS}^6 \approx 11.68$, Levenshtein's upper bound is $\mathrm{L}^6 \approx 11.97$.}
	\label{fig:plot-krawtchouk-upper-bound-root}
  \end{figure}

  \section{Lower bound on the regularity following Krasikov and Zarkh}
  \label{sec:lower-bound-krasikov-zarkh}

	\begin{thm}
	\label{thm:bound-regularity-kz}
		Let $f_1,\ldots,f_m \in \mathbf{K}[X_1,\ldots,X_n]$ be as in \autoref{lem:regularity-smallest-root}.
		The smaller root of the polynomial $p(k) = k^2 -(2m-n)k + \tfrac 14 n^2$ yields a lower bound for the degree of regularity as follows
		\begin{align*}
			d_{reg} \geq 1 + \left\lfloor  \tfrac 12 \left( 2m-n - 2 \sqrt{m(m-n)} \right) \right\rfloor .
		\end{align*}
	\end{thm}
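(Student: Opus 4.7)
The plan is to apply Lemma~\ref{lem:regularity-bounds} with the Krasikov--Zarkh strict lower bound \eqref{eq:smallest-root-lower-bounds-kz} after a cosmetic simplification. In \eqref{eq:smallest-root-lower-bounds-kz} the parenthesised factor multiplying $\sqrt{k(2m-n-k)}$ is at most $1$ because its correction term is nonnegative for $1 \le k < (2m-n)/2$, so dropping that correction gives the simpler but still strict lower bound
\[
	\mathrm{LB}_k^{2m-n}(1) \;:=\; \tfrac{1}{2}(2m-n) - \sqrt{k(2m-n-k)} \;<\; d_k^{2m-n}(1),
\]
valid in the same range of $k$.

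Next I translate the threshold condition $\mathrm{LB}_k^{2m-n}(1) \ge m-n$ into a sign condition on the quadratic $p(k) = k^2 - (2m-n)k + \tfrac{1}{4}n^2$ appearing in the theorem statement. Using $\tfrac{1}{2}(2m-n) - (m-n) = n/2$, the inequality reads $\sqrt{k(2m-n-k)} \le n/2$; squaring both nonnegative sides yields $k(2m-n-k) \le n^2/4$, which rearranges to exactly $p(k) \ge 0$.

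I then solve $p(k) \ge 0$. The discriminant computes to $(2m-n)^2 - n^2 = 4m(m-n) > 0$ (since $m > n$), and the roots are $k_\pm = \tfrac{1}{2}\bigl(2m-n \pm 2\sqrt{m(m-n)}\bigr)$. Because $p$ opens upward and $k_- < (2m-n)/2 < k_+$, the condition $p(k) \ge 0$ on $[0,(2m-n)/2)$ holds precisely for $k \le k_-$. The largest admissible integer is therefore $\lfloor k_- \rfloor$, and invoking the strict-bound variant of Lemma~\ref{lem:regularity-bounds} produces $d_{reg} \ge 1 + \lfloor k_- \rfloor$, which is the claimed bound.

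There is no substantive obstacle; the argument simply glues together the apparatus already in place. The minor bookkeeping points worth confirming in the write-up are that discarding the nonnegative KZ correction preserves strictness, that $\lfloor k_- \rfloor$ lies in the validity range $[1,(2m-n)/2)$ of \eqref{eq:smallest-root-lower-bounds-kz} (immediate from $k_- < (2m-n)/2$), and that if $k_- < 1$ the claim degenerates to the trivial $d_{reg} \ge 1$.
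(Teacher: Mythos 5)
Your proposal is correct and takes essentially the same route as the paper: both drop the Krasikov--Zarkh correction term (the paper via a monotonicity argument showing the bracketed factor lies in $(0,1]$, you via the simpler and sufficient observation that the correction is nonnegative for $k \le (2m-n)/2$), reduce the threshold condition to $p(k) \ge 0$, and read off the smaller root of the quadratic. The only differences are cosmetic; your explicit remarks on strictness and on the degenerate case $k_- < 1$ are sound and, if anything, slightly tidier than the paper's write-up.
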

	
	\begin{proof}
		By \autoref{lem:regularity-bounds} and \eqref{eq:smallest-root-lower-bounds-kz} from \autoref{lem:smallest-root-lower-bounds} we have
		\begin{align*}
			d_{reg} \geq 1 + \max & \hspace{0.05cm}  \Bigg\{ k : \mbox { }
		m-n \leq  \tfrac 12 (2m-n) \\
			&  \hspace{0.15cm} - \sqrt{k(2m-n-k)} \left( 1- \frac{3}{2} \left( \frac{2m-n-2k}{2k(2m-n-k)} \right)^{\frac{2}{3}} \right) \Bigg\} ,
		\end{align*}
		where the maximum is taken over $k=1,\ldots,\lfloor (2m-n)/2 \rfloor$.
		Hence we seek the largest integer $1 \leq k \leq \lfloor (2m-n)/2 \rfloor$ such that
		\begin{align}
		\label{eq:kz-largest-k}
  			0 \leq \frac{n}{2} - \sqrt{k(2m-n-k)} \left( 1- \frac{3}{2} \left( \frac{2m-n-2k}{2k(2m-n-k)} \right)^{\frac{2}{3}} \right) .
  		\end{align}
		Now, for $1 \leq k \leq (2m-n)/2$ the term 
		\begin{align*}
			1- \frac{3}{2} \left( \frac{2m-n-2k}{2k(2m-n-k)} \right)^{\frac{2}{3}}
		\end{align*}
		is monotonically increasing, since its derivative (in $k$) is positive for any choice of $m>n$, and hence by simple evaluation at $k = 1$ and $k= (2m-n)/2 $ one concludes that it takes values in $(0,1]$.
		That is, we can simplify our consideration, seeking the largest integer $1 \leq  k \leq  \lfloor (2m-n)/2 \rfloor$ such that
		\[
			0 \leq \frac{n}{2} - \sqrt{k(2m-n-k)} ,
		\]
		since any such $k$ is valid also for \eqref{eq:kz-largest-k} and hence gives a lower bound for the degree of regularity of $f_1,\ldots,f_m$.
		That is, we can equivalently consider the inequality
		\[
			0 \leq k^2 - (2m-n)k + \tfrac 14 n^2
		\]
		The polynomial $p(k) = k^2 -(2m-n)k + \tfrac 14 n^2$ has a positive discriminant $\mathrm{Disc}_k(p)=m(m-n)$, and hence real roots given by
		\[
			k_{1,2} = \tfrac 12 \left( 2m-n \pm 2 \sqrt{m(m-n)} \right) .
		\]
		Moreover, since $k \leq \lfloor (2m-n)/2 \rfloor$ we can identify our integer
		\[
 			k \leq \left\lfloor \tfrac 12 \left( 2m-n - 2 \sqrt{m(m-n)} \right) \right\rfloor .
			\qedhere
		\]
	\end{proof}

   \section{Lower bound on the regularity following Levenshtein and Szeg{\H o}}
   \label{sec:lower-bound-levenshtein-szegoe}
   
  Recall the real zero $i_1 \approx 3.37213$ of the Airy's function $\mathcal{A}(x)$ described in \autoref{lem:smallest-root-lower-bounds}.
  
  	\begin{thm}
	\label{thm:bound-regularity-ls}
		Let $f_1,\ldots,f_m \in \mathbf{K}[X_1,\ldots,X_n]$ be as in \autoref{lem:regularity-smallest-root}.
		The quartic polynomial
	 	\begin{align*}
		 	q(w) = w^4 - \frac{n}{\sqrt{2(2m-n)}} w - 6^{- \frac 13} i_1 
		\end{align*}
		has a unique positive real root $w_4$, and the degree of regularity $d_{reg}$ of $f_1,\ldots,f_m$ is bounded from below by
		\begin{align*}
		%\label{eq:lower-bound-regularity-ls}
			d_{reg} \geq  1 + \left\lfloor \tfrac 12 \left( w_4^6 -1 \right) \right\rfloor .
		\end{align*}
		Furthermore, with
		\begin{align*}
			a & = \frac{n}{\sqrt{2(2m-n)}} \\
			b & = -6^{- \frac 13} i_1 \approx - 1.85575
		\end{align*}
		we have
		\begin{align*}
    			w_4 & =  \frac 12 \left( \sqrt{\TermAa} + \sqrt{ \frac{2 a}{\sqrt{\TermAa}} - \TermAa } \right) ,
		\end{align*}
		where
		\begin{align*}
			\TermAa & = \TermAb^{\frac 13} - \frac{4b}{3} \TermAb^{-\frac 13} , \\
			\TermAb & = \tfrac 12 a^2 + \tfrac 12 \sqrt{a^4 + \tfrac{256}{27}b^3 } . 
		\end{align*}

	\end{thm}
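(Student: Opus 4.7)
The plan is to combine \autoref{lem:regularity-bounds} with the Levenshtein--Szeg{\H o} lower bound \eqref{eq:smallest-root-lower-bounds-ls}, reduce the threshold condition $\mathrm{LB}_k^{2m-n}(1) \geq m-n$ to a single inequality in $k$, and then to solve the resulting quartic via Ferrari's method together with Cardano's formula for the arising resolvent cubic.

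First, I would substitute \eqref{eq:smallest-root-lower-bounds-ls} into the threshold inequality. Subtracting $\tfrac{1}{2}(2m-n)$ from both sides (the left hand side becoming $-n/2$) and then dividing by $\sqrt{\tfrac{1}{2}(2m-n)} > 0$ rewrites $\mathrm{LB}_k^{2m-n}(1) \geq m-n$ as
\begin{align*}
	\frac{n}{\sqrt{2(2m-n)}} \geq \sqrt{2k+1} - 6^{-\frac{1}{3}} i_1 (2k+1)^{-\frac{1}{6}} .
\end{align*}
With the substitution $w := (2k+1)^{1/6} > 0$, so that $w^3 = \sqrt{2k+1}$ and $w^{-1} = (2k+1)^{-1/6}$, multiplication by the positive quantity $w$ converts the condition into $q(w) \leq 0$. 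Thus by \autoref{lem:regularity-bounds} one obtains
\begin{align*}
	d_{reg} \geq 1 + \max\{ k : q((2k+1)^{1/6}) \leq 0 \} .
\end{align*}

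Next, I would establish existence and uniqueness of the positive real root $w_4$. Since $q''(w) = 12 w^2 \geq 0$, the quartic $q$ is convex on $\mathbf{R}$; together with $q(0) = -6^{-\frac{1}{3}} i_1 < 0$ and $\lim_{|w| \to \infty} q(w) = +\infty$, convexity forces $q$ to have exactly two real roots, a positive one $w_4$ and a negative one. Consequently $\{ w > 0 : q(w) \leq 0 \} = (0, w_4]$, and $(2k+1)^{1/6} \leq w_4$ is equivalent to $k \leq \tfrac{1}{2}(w_4^6 - 1)$, whose largest integer solution is $\lfloor \tfrac{1}{2}(w_4^6 - 1) \rfloor$. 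This yields the announced lower bound for $d_{reg}$.

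For the explicit form of $w_4$ I would apply Ferrari's method to the depressed quartic $w^4 - a w + b = 0$. Completing the square as $(w^2 + y)^2 = 2 y w^2 + a w + (y^2 - b)$ and imposing that the right hand side be a perfect square in $w$ yields the resolvent cubic $a^2 = 8 y (y^2 - b)$. Setting $\TermAa = 2y$ turns this into $\TermAa^3 - 4 b \TermAa - a^2 = 0$, which Cardano's formula solves in the shape $\TermAa = \TermAb^{1/3} - \tfrac{4b}{3} \TermAb^{-1/3}$ with $\TermAb$ as announced. Back substitution reduces the original quartic to two quadratics in $w$, and selecting the branch giving the positive root produces $w_4 = \tfrac{1}{2}(\sqrt{\TermAa} + \sqrt{2a/\sqrt{\TermAa} - \TermAa})$. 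The main obstacle is this final step: correctly tracking the real branches of the cube and square roots throughout the Cardano/Ferrari computation, so that $\TermAb^{1/3} - \tfrac{4b}{3}\TermAb^{-1/3}$ indeed returns the required real value of $\TermAa$ regardless of the sign of the inner radicand $a^4 + \tfrac{256}{27} b^3$. The remaining steps are elementary rearrangements supported by \autoref{lem:regularity-bounds}, \autoref{lem:smallest-root-lower-bounds}, and convexity.
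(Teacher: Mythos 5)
Your argument for the two substantive claims --- that $q$ has a unique positive real root $w_4$ and that $d_{reg} \geq 1 + \lfloor \tfrac 12 (w_4^6-1)\rfloor$ --- is correct and follows essentially the same route as the paper: \autoref{lem:regularity-bounds} plus \eqref{eq:smallest-root-lower-bounds-ls}, normalisation by $\sqrt{(2m-n)/2}$, the substitution $w=(2k+1)^{1/6}$, and multiplication by $w>0$ to reach $q(w)\le 0$. The only real difference is the uniqueness argument: the paper computes $\mathrm{Disc}_w(q)<0$ and then uses the sign of the constant term, whereas you use strict convexity of $q$ together with $q(0)<0$ and $q\to+\infty$. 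Your route is slightly cleaner, since it delivers at once that $\{w>0 : q(w)\le 0\}=(0,w_4]$, which is exactly what is needed to see that the admissible $k$ form the full range $1\le k\le \tfrac 12(w_4^6-1)$ and hence that the maximum is the floor.

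The weak point is the closed-form expression for $w_4$, which the paper itself does not prove (it is delegated to ``a symbolic computation in SageMath''). Your resolvent cubic $\TermAa^3-4b\TermAa-a^2=0$ is the correct one for the depressed quartic $w^4-aw+b$, but Cardano applied to \emph{that} cubic yields $\TermAa = T^{1/3}+\tfrac{4b}{3}T^{-1/3}$ with $T=\tfrac 12 a^2+\tfrac 12\sqrt{a^4-\tfrac{256}{27}b^3}$ --- note the two sign changes relative to the announced $\TermAa$ and $\TermAb$. Since $b<0$, the radicand $a^4-\tfrac{256}{27}b^3$ is positive, so no casus irreducibilis occurs and no branch tracking is needed; the obstacle you flag is not where the difficulty lies. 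The announced $\TermAa$ is instead a root of the sign-flipped cubic $\TermAa^3+4b\TermAa-a^2=0$: for instance with $m=24$, $n=12$ (so $a=\sqrt 2$) the correct resolvent root is $\approx 0.267$, the announced formula evaluates to $\approx 2.850$, and inserting the latter into $\tfrac 12(\sqrt{\TermAa}+\sqrt{2a/\sqrt{\TermAa}-\TermAa})$ gives a non-real number rather than $w_4\approx 1.40$. So your Ferrari--Cardano plan is the right one, but carried out correctly it does not land on the formula as printed; you would either arrive at the sign-corrected variant or have to document non-principal branch conventions under which the printed version is to be read. This affects only the ``furthermore'' clause; the lower bound on $d_{reg}$ is unaffected.
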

	
	\begin{proof}
		By \autoref{lem:regularity-bounds} and \eqref{eq:smallest-root-lower-bounds-ls} from \autoref{lem:smallest-root-lower-bounds} we have
		\begin{align*}
			d_{reg} \geq 1 + \max & \hspace{0.05cm} \Bigg\{ k : \mbox { }
		m-n \leq \tfrac 12 (2m-n) \\ 
			& \hspace{0.15cm} - \sqrt{\tfrac{1}{2}(2m-n)} \left( \sqrt{2k+1}- 6^{- \frac 13} i_1 (2k+1)^{- \frac 16} \right) \Bigg\} ,
		\end{align*}
		where the maximum is taken over $k=1,\ldots,2m-n$.
		Hence we seek the largest integer $1 \leq k \leq 2m-n$ such that
		\begin{align*}
		%\label{eq:ls-largest-k}
			0 \leq \frac{n}{2} - \sqrt{\tfrac{1}{2}(2m-n)} \left( \sqrt{2k+1}- 6^{- \frac 13} i_1 (2k+1)^{- \frac 16} \right) .
  		\end{align*}
  		Since $m > n$ this is equivalent to consider
  		\begin{align*}
  			0 \leq \frac{n}{\sqrt{2(2m-n)}} - \sqrt{2k+1}- 6^{- \frac 13} i_1 (2k+1)^{- \frac 16} .
  		\end{align*}
  		We do a variable substitution
		\begin{align}
		\label{eq:k-variable-substitution}
			k \mapsto \tfrac 12 \left(w^6 -1\right) ,
		\end{align}
		and obtain the Laurent polynomial
  		\begin{align*}
			-w^3 + 6^{- \frac 13} i_1 \frac 1w + \frac{n}{\sqrt{2(2m-n)}} .
		\end{align*}
  		Note that we consider only $w \neq 0$ and that we have the rational function
  		\begin{align*}
    			- \frac 1w \left( w^4 - \frac{n}{\sqrt{2(2m-n)}} w - 6^{- \frac 13} i_1 \right) .
  		\end{align*}
  		So we are interested in the roots of the nominator which is the quartic polynomial given above
  		\begin{align*}
    			q(w) = w^4 - \frac{n}{\sqrt{2(2m-n)}} w - 6^{- \frac 13} i_1 .
  		\end{align*}
		Its discriminant $\mathrm{Disc}_w(q)$ is negative for any $m>n$ since
		\begin{align*}
			\mathrm{Disc}_w(q) = -256 \Bigg( 6^{- \frac 13} i_1 \Bigg)^3 -27 \left( \frac{n}{\sqrt{2(2m-n)}} \right)^4 < 0.
		\end{align*}
  		Therefore $q$ has two complex conjugated roots $w_1,w_2$ and two real roots $w_3 \leq w_4$. Moreover, since the constant term $- 6^{- 1/3} i_1 \approx - 1.85575$ of the polynomial $q$ is negative there is a unique positive real root $w_4$ (Cf. \autoref{fig:quartic}). Undoing the variable substitution \eqref{eq:k-variable-substitution} yields the claimed lower bound for the degree of regularity of $f_1,\ldots,f_m$. A symbolic computation in SageMath gives the expression for $w_4$ and finishes the proof.
	\end{proof}
	
	\begin{figure}[ht!]
		\centering
		\includegraphics[scale=0.42]{./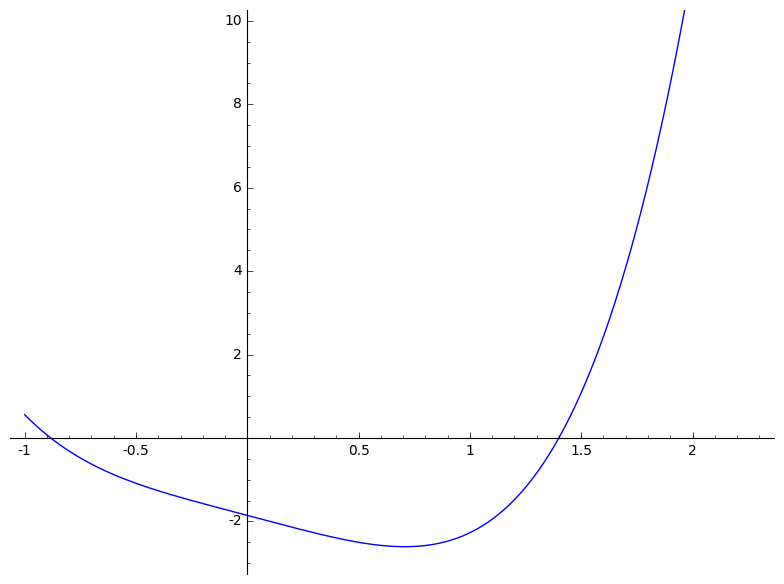}
		\Description{A plot of the quartic $w^4 - n / \sqrt{2(2m-n)} w - 6^{- \tfrac 13} i_1$ for $m=24$ and $n=12$, i.e. of the polynomial $w^4 - 0.1179   w - 1.85575$. The real roots are $w_3 \approx -0.88$ and $w_4 \approx 1.40$, hence $\tfrac 12 (w_4^6 -1) \approx 3.26$. Note the first non-positive coefficient in the expansion of $(1-z)^{12}(1+z)^{24}$ is the coefficient of $z^4$, and $d_{reg} = 4 \geq 1 + \lfloor 3.26 \rfloor = 4$.}
		\caption{A plot of the quartic $w^4 - n / \sqrt{2(2m-n)} w - 6^{- \tfrac 13} i_1$ for $m=24$ and $n=12$, i.e. of the polynomial $w^4 - 0.1179   w - 1.85575$. The real roots are $w_3 \approx -0.88$ and $w_4 \approx 1.40$, hence $\tfrac 12 (w_4^6 -1) \approx 3.26$. Note the first non-positive coefficient in the expansion of $(1-z)^{12}(1+z)^{24}$ is the coefficient of $z^4$, and $d_{reg} = 4 \geq 1 + \lfloor 3.26 \rfloor = 4$.}
		\label{fig:quartic}
	\end{figure}
	
	Let us focus on the asymptotic growth of the lower bound given in \autoref{thm:bound-regularity-ls}.	We adopt the usual notation of asymptotically equivalent functions, that is $f \sim g$ iff $\lim_{x \rightarrow \infty} f(x)/g(x) = 1$.
	
	\begin{cor}
		Assume $m$ grows subquadratic in $n$, i.e. $m = o(n^2)$. Then, as $n \rightarrow \infty$, the lower bound of \autoref{thm:bound-regularity-ls} behaves as
		\begin{align*}
  			1 + \left\lfloor \tfrac 12 \left( w_4^6 -1 \right) \right\rfloor \sim \frac{n^2}{4(2m-n)} .
		\end{align*}
	\end{cor}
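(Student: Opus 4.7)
The plan is to analyze the asymptotic behavior of the positive real root $w_4$ directly from the defining equation $q(w_4) = 0$, rather than from the closed-form expression given in \autoref{thm:bound-regularity-ls}, which would be awkward to simplify. Write $a = n/\sqrt{2(2m-n)}$ and $c = 6^{-1/3} i_1 > 0$. The hypothesis $m = o(n^2)$ is equivalent to $2m-n = o(n^2)$, and hence $a \to \infty$ as $n \to \infty$.

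From $q(w_4) = 0$ I obtain $w_4^4 = a w_4 + c$, equivalently $w_4^3 = a + c/w_4$. Since $c > 0$, this immediately forces $w_4 > a^{1/3}$. Substituting this lower bound back into the right-hand side yields $w_4^3 < a + c a^{-1/3}$, so that
\[
a^{1/3} < w_4 < a^{1/3}\left(1 + c\, a^{-4/3}\right)^{1/3}.
\]
Because $c\, a^{-4/3} \to 0$, a squeeze argument gives $w_4 \sim a^{1/3}$, and raising to the sixth power yields $w_4^6 \sim a^2 = n^2/(2(2m-n))$.

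To conclude, note that $a^2 \to \infty$ under the subquadratic assumption, so the additive $-\tfrac12$ inside the floor and the outer $+1$ are absorbed into the $\sim$ relation:
\[
1 + \left\lfloor \tfrac12\bigl(w_4^6 - 1\bigr) \right\rfloor = \tfrac12 w_4^6 + O(1) \sim \frac{a^2}{2} = \frac{n^2}{4(2m-n)}.
\]

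The main (and essentially only) obstacle is establishing $w_4 \sim a^{1/3}$ rigorously; everything else is routine asymptotic bookkeeping. The one subtlety to watch is that the main term itself must tend to infinity for the $\sim$ symbol to be meaningful, which is guaranteed by $2m-n = o(n^2)$ giving $n^2/(2m-n) \to \infty$. A tempting but less fruitful alternative would be to start from the radical expression for $w_4$ via $U_{a,b}$ and $T_{a,b}$ in \autoref{thm:bound-regularity-ls}; working from the quartic equation itself keeps the argument short.
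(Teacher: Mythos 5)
Your proof is correct, but it takes a genuinely different route from the paper. The paper starts from the closed-form radical expression for $w_4$ in terms of $\TermAa$ and $\TermAb$, establishes $\TermAb \sim a^2$ and $\TermAa \sim a^{2/3}$, and substitutes these equivalents into the nested radicals to conclude $w_4 \sim a^{1/3}$; the final step is the same bookkeeping you perform, namely $\tfrac12(w_4^6-1) \sim \tfrac12 a^2 = n^2/(4(2m-n))$. You instead bypass the radical expression entirely and bootstrap from the defining equation: $w_4^3 = a + c/w_4$ with $c = 6^{-1/3}i_1 > 0$ forces $w_4 > a^{1/3}$, and feeding that back in gives $a^{1/3} < w_4 < a^{1/3}(1 + c\,a^{-4/3})^{1/3}$, whence $w_4 \sim a^{1/3}$ by squeezing. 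Your argument is self-contained and arguably tighter: it produces explicit two-sided bounds rather than relying on substituting asymptotic equivalents inside a difference of two terms that are themselves asymptotically equal (the step $\sqrt{2a/\sqrt{\TermAa}} - \TermAa$ in the paper), a manipulation that in general requires second-order error control to justify. What the paper's route buys is that it exercises the explicit formula for $w_4$ stated in \autoref{thm:bound-regularity-ls}, confirming its consistency; what your route buys is brevity, rigor, and independence from that formula. Both correctly note that the hypothesis $m = o(n^2)$ is what makes $a \to \infty$ and hence makes the additive constants and the floor negligible.
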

	
	\begin{proof}
		We borrow the notation from \autoref{thm:bound-regularity-ls}. For $m = o(n^2)$ we have $a \rightarrow \infty$, $\TermAb \sim a^2$, and $\TermAa \sim a^{\frac 23}$. Hence,
    \begin{align*}
      w_4 & \sim \frac 12 \sqrt{a^{\frac 23}} + \frac{1}{2} \sqrt{ \frac{2 a}{\sqrt{a^{\frac 23}}} - a^{\frac 23}}
      = \frac 12 a^{\frac 13} + \frac{1}{2} \sqrt{ 2a^{\frac 23} - a^{\frac 23}}
      = a^{\frac 13} , %= \left( \frac{n}{\sqrt{2(2m-n)}} \right)^{\frac 13}
    \end{align*}
    and
    \[
      \frac 12 \left( w_4^6 -1 \right) \sim \frac 12 \left( a^{\frac 63} -1 \right) = \frac 12 \left( \frac{n^2}{2(2m-n)} -1 \right) \sim \frac{n^2}{4(2m-n)} . \qedhere
    \]
	\end{proof}

	We omit a deeper asymptotic analysis involving monotonicity considerations for reasons of brevity, but further summarize some interesting cases.
	
	\begin{cor}
		\label{cor:asymptotic-cases-ls}
		Let $\alpha,\beta >0$ and $\gamma \in (0,1)$ be real constants. Then, as $n \rightarrow \infty$, the lower bound of \autoref{thm:bound-regularity-ls} behaves as
		\[
  			1 + \left\lfloor \tfrac 12 \left( w_4^6 -1 \right) \right\rfloor
  				\sim
  				\begin{cases}
               				\frac{n}{4 \left( 1+\tfrac{2\alpha}{n} \right)} & \mbox{, for $m = n + \alpha$,}\\
		            	   		\frac{n}{4(2\beta-1)} & \mbox{, for $m = \beta  n$,}\\
        				       	\frac{n}{4(2\log(n)-1)} & \mbox{, for $m = n  \log(n)$,}\\
        				       	\frac 18 n^\gamma & \mbox{, for $m = n^{2-\gamma}$.}
		        \end{cases}
		\]
	\end{cor}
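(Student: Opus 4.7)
The plan is to reduce each of the four regimes to the preceding corollary, which states
\[
1 + \left\lfloor \tfrac 12 \left( w_4^6 - 1 \right) \right\rfloor \sim \frac{n^2}{4(2m-n)}
\]
whenever $m = o(n^2)$, and then to compute the resulting ratio explicitly in each case.

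First, I would verify that the hypothesis $m = o(n^2)$ holds throughout. This is immediate for $m = n + \alpha$ and $m = \beta n$, holds for $m = n \log n$ because $\log n = o(n)$, and holds for $m = n^{2-\gamma}$ because $2 - \gamma < 2$. Hence the preceding corollary applies uniformly and reduces the problem to estimating $n^2 / (4(2m-n))$ under each growth assumption. In passing I would note that the overdetermined hypothesis $m > n$ forces $\beta > 1$ in the linear regime (and is automatic in the other three for $n$ sufficiently large).

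Next, I would substitute each choice of $m$ into $n^2 / (4(2m-n))$ and simplify. The first three cases are exact algebraic identities: one computes $2m - n$ to equal $n + 2\alpha$, $(2\beta - 1) n$, and $(2 \log n - 1) n$, respectively, and the claimed right-hand sides fall out directly. The fourth case requires one further asymptotic step, namely writing $2m - n = 2 n^{2-\gamma} - n$ and observing
\[
\frac{n^2}{4(2m-n)} = \frac{n^\gamma}{4 \bigl( 2 - n^{\gamma - 1} \bigr)} \sim \frac{n^\gamma}{8} ,
\]
since $\gamma - 1 < 0$ implies $n^{\gamma - 1} \to 0$ as $n \to \infty$. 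Transitivity of $\sim$ closes all four cases.

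I do not foresee any genuine obstacle; the statement is essentially a corollary of the preceding corollary, with only the fourth case requiring more than a direct algebraic identity. The only minor care needed is to invoke transitivity of asymptotic equivalence correctly in the final case.
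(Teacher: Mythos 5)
Your proposal is correct and follows exactly the route the paper intends: the corollary is stated without proof (the author explicitly omits the routine asymptotic analysis), and the evident derivation is to substitute each growth regime for $m$ into the preceding corollary's estimate $\tfrac{n^2}{4(2m-n)}$, which is what you do. Your computations in all four cases check out, and your side remark that $m>n$ forces $\beta>1$ is a fair observation about the hypothesis as stated.
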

	
	\begin{rem}
		Note that \autoref{cor:asymptotic-cases-ls} carries similarities with the summary of Gr\"obner basis computation costs in \cite[\S 6]{BFS2003}, though the corresponding polynomial equations systems differ.
	\end{rem}
	
	\begin{rem}
		In the case when $m$ grows quadratically in $n$, i.e. $m = \delta n^2$ for some positive constant $\delta \in \mathbf{R}$, or when $m$ grows superquadratic in $n$, i.e. $m = \omega(n^2)$, the lower bound given in \autoref{thm:bound-regularity-ls} tends to the value $2$.
    Those two cases behave as expected. As the number of quadratic semi-regular (i.e. in this sense algebraically indepent) equations becomes large, the Macaulay matrix already contains all homogeneous entries of degree $2$ whose total number is $\binom{n+2+1}{n+1} \sim \tfrac 12 n^2$.
    \end{rem}

\section{Upper bound on the regularity following Levenshtein and Szeg{\H o}}
\label{sec:upper-bound-levenshtein-szegoe}

	\begin{thm}
	\label{thm:upper-bound-regularity-ls}
		Let $f_1,\ldots,f_m \in \mathbf{K}[X_1,\ldots,X_n]$ be as in \autoref{lem:regularity-smallest-root}.
		If the discriminant $\mathrm{Disc}_k(t) = (2m-n+1)^2 - 4n^2$ of the polynomial
		$t(k) = k^2 - (2m-n+3)k + 2m-n+2 +  n^2$ is non-negative, then its smaller root yields
		a lower bound for the degree of regularity as follows
		\[
			d_{reg} \leq 1+  \left\lceil \tfrac 12 \left( 2m-n+3 - \sqrt{(2m-n+1)^2 - 4n^2} \right) \right\rceil.
		\]
	\end{thm}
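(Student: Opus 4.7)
The plan is to mirror the proof of Theorem \ref{thm:bound-regularity-kz}, but using the upper-bound half of Lemma \ref{lem:regularity-bounds} together with the Levenshtein-Szeg\H{o} upper bound \eqref{eq:smallest-root-upper-bounds-ls} in place of the Krasikov-Zarkh lower bound. Concretely, I substitute $\mathrm{UB}_k^{2m-n}(1) = \tfrac12(2m-n) - \tfrac12\sqrt{(2m-n-k+2)(k-1)}$ into the second displayed inequality of Lemma \ref{lem:regularity-bounds}, obtaining
\begin{align*}
  d_{reg} \leq 1 + \min\Bigl\{k \,:\, \tfrac12(2m-n) - \tfrac12\sqrt{(2m-n-k+2)(k-1)} \leq m-n \Bigr\}.
\end{align*}
Rearranging the inequality inside the set gives $n \leq \sqrt{(2m-n-k+2)(k-1)}$, which, since both sides are non-negative for $1 \leq k \leq 2m-n+2$, is equivalent after squaring to $n^2 \leq (2m-n-k+2)(k-1)$. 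Expanding yields precisely $t(k) = k^2 - (2m-n+3)k + (2m-n+2) + n^2 \leq 0$.

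Next, I would compute the discriminant of $t$ by a direct expansion: $(2m-n+3)^2 - 4\bigl((2m-n+2)+n^2\bigr) = (2m-n)^2 + 2(2m-n) + 1 - 4n^2 = (2m-n+1)^2 - 4n^2$. Under the theorem's hypothesis that this discriminant is non-negative, $t$ has two real roots
\begin{align*}
  k_{1,2} = \tfrac12\bigl(2m-n+3 \pm \sqrt{(2m-n+1)^2 - 4n^2}\bigr),
\end{align*}
and the set $\{k : t(k) \leq 0\}$ is the closed interval $[k_1,k_2]$. The smallest integer in this interval is $\lceil k_1 \rceil$, and substituting $\lceil k_1 \rceil$ into the upper bound of Lemma \ref{lem:regularity-bounds} produces the claimed estimate.

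The only genuine subtlety, beyond routine algebra, is checking that $\lceil k_1 \rceil$ actually lies in the range $1,\ldots,2m-n$ on which the bound \eqref{eq:smallest-root-upper-bounds-ls} is applicable, and that the squaring step is legitimate (both sides non-negative). For the first point one verifies from the explicit expression of $k_1$ that $1 \leq k_1 \leq 2m-n$ whenever the discriminant is non-negative, using $m>n$; this is the same elementary monotonicity argument as in the proof of Theorem \ref{thm:bound-regularity-kz}, and I expect it to be the main (still modest) obstacle, since the rest is purely quadratic-formula bookkeeping.
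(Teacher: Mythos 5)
Your proposal follows the paper's own proof essentially step for step: the same substitution of the Levenshtein--Szeg\H{o} upper bound \eqref{eq:smallest-root-upper-bounds-ls} into the upper-bound half of \autoref{lem:regularity-bounds}, the same rearrangement to $n \leq \sqrt{(2m-n-k+2)(k-1)}$, the same squaring to reach $t(k)\leq 0$, and the same quadratic-formula conclusion under the non-negative discriminant hypothesis. The additional care you flag about the squaring step and the admissible range of $k$ is sound but is treated only implicitly in the paper, so there is nothing substantively different to report.
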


	\begin{proof}
		By \autoref{lem:regularity-bounds} and \eqref{eq:smallest-root-upper-bounds-ls} from
		\autoref{lem:smallest-root-upper-bounds} we have
		\begin{align*}
			d_{reg} \leq 1 + \min & \hspace{0.05cm} \Bigg\{ k : \mbox { }
				m-n \geq \tfrac 12 (2m-n) \\
			& \hspace{0.15cm} - \tfrac 12 \sqrt{(2m-n-k+2)(k-1)} \Bigg\} ,
		\end{align*}
		where the minimum is taken over $k=1,\ldots,2m-n$.
		Hence we seek the smallest integer $1 \leq k \leq 2m-n$ such that
		\begin{align}
		\label{eq:ls-smallest-k}
			 n \leq \sqrt{(2m-n-k+2)(k-1)}.
  		\end{align}
  		We square \eqref{eq:ls-smallest-k} and obtain the inequality
  		\begin{align*}
			 0 \geq k^2 - \Big(2m-n+3\Big)k + \left( 2m-n+2 + n^2 \right) .
  		\end{align*}
  		The roots of the quadratic polynomial
  		\begin{align*}
  			t(k) = k^2 - \Big(2m-n+3\Big)k + \left( 2m-n+2 + n^2 \right)
  		\end{align*}
  		are
  		\begin{align*}
  			k_{1,2} = \tfrac 12 \left( 2m-n+3 \pm \sqrt{(2m-n+1)^2 - 4n^2} \right) .
  		\end{align*}
  		They are real in the case of a non-negative discrimant, i.e.
  		\begin{align}
  		\label{eq:ls-quadratic-discriminant}
	  		0 \leq \mathrm{Disc}_k(t) = (2m-n+1)^2 - 4n^2 .
  		\end{align}
  		Recall that we are interested in the smallest integer $k \leq 2m-n$ that satisfies \eqref{eq:ls-smallest-k}. Hence, under the non-negativity condition \eqref{eq:ls-quadratic-discriminant} we have
  		\[
  			d_{reg} \leq 1 + \left\lceil \tfrac 12 \left( 2m-n+3 - \sqrt{(2m-n+1)^2 - 4n^2} \right) \right\rceil . \qedhere 
  		\]
\end{proof}

	\begin{rem}
	\label{rem:rausfliegen-family}
		In contrast to the lower bounds established in \autoref{thm:bound-regularity-kz} and \autoref{thm:bound-regularity-ls},
		which exist for any $m>n$, the upper bound in \autoref{thm:upper-bound-regularity-ls} depends on a non-negative
		discriminant $\mathrm{Disc}_k(t) = (2m-n+1)^2 - 4n^2$. This can be interpreted in terms of the family of Krawtchouk
		polynomials. Our \autoref{fig:plot-krawtchouk-upper-bound-root} actually illustrates the non-negative case.
		In the case of a negative discriminant, the set $\{ k \mbox{ } : \mbox{ } \mathrm{UB}_{k}^{2m-n}(1) \leq m-n \}$
		from \autoref{lem:regularity-bounds}, with $\mathrm{UB}_{k}^{2m-n}(1)$ being the upper bound of Levenshtein and
		Szeg\H{o} (see \eqref{eq:smallest-root-upper-bounds-ls} in \autoref{lem:smallest-root-upper-bounds}), is empty.
		That is, for any family member this upper bound does not pass $m-n$.	
	\end{rem}

\section{Upper bound on the regularity following Levenshtein}
\label{sec:upper-bound-levenshtein}

\begin{thm}
	\label{thm:upper-bound-regularity-l}
		Let $f_1,\ldots,f_m \in \mathbf{K}[X_1,\ldots,X_n]$ be as in \autoref{lem:regularity-smallest-root}.
		The sextic polynomial
		\[
			s(x) =  x(x-1)^2(2m-n-x^3) - \tfrac 14 n^2
		\]
		has a global maximum at some $x' \in (1,(2m-n)^{1/3})$. If $s(x') \geq 0$, then $s$ has a a unique real root $x_5 \in (1,x']$. If $x_5^3 \leq \lfloor (2m-n)/2 \rfloor$, then
		\[
			d_{reg} \leq  1 + \left\lceil x_5^3 \right\rceil .
		\]
	\end{thm}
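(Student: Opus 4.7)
The plan is to invoke \autoref{lem:regularity-bounds} together with Levenshtein's upper bound \eqref{eq:smallest-root-upper-bounds-l}, which is valid for $1 \leq k \leq \lfloor (2m-n)/2 \rfloor$. Substituting that bound in and simplifying reduces the task to finding the smallest integer $k$ in this range with
\[
    (k^{1/2} - k^{1/6})\sqrt{2m-n-k} \geq \tfrac{n}{2}.
\]
The combined half- and sixth-power structure makes the substitution $x = k^{1/3}$ natural, under which $k^{1/2} - k^{1/6} = \sqrt{x}\,(x-1)$ and $\sqrt{2m-n-k} = \sqrt{2m-n-x^3}$. Both sides are non-negative for $x \geq 1$, so squaring is reversible, and the inequality transforms into $s(x) \geq 0$.

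Next I would establish the geometric picture of $s$ on $[1,\infty)$. Direct evaluation gives $s(1) = s\bigl((2m-n)^{1/3}\bigr) = -n^2/4 < 0$, since a distinct factor vanishes in each case. For $x \geq (2m-n)^{1/3}$ the product $x(x-1)^2(2m-n-x^3)$ is non-positive, so $s(x) \leq -n^2/4$ and $s(x) \to -\infty$ as $x \to \infty$. Hence the supremum of $s$ on $[1,\infty)$ is attained at some point $x' \in \bigl(1,(2m-n)^{1/3}\bigr)$. Under the hypothesis $s(x') \geq 0$, the intermediate value theorem applied on $[1,x']$ produces at least one root of $s$ there.

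To pin down both $x'$ and the root $x_5$ as unique, I would verify that $s$ is strictly increasing on $(1,x')$ by establishing unimodality of $g(x) = x(x-1)^2(2m-n-x^3)$. Factoring yields $g'(x) = (x-1)h(x)$ with $h(x) = (3x-1)(2m-n-x^3) - 3x^3(x-1)$. One checks $h(1) = 2(2m-n-1) > 0$ and $h\bigl((2m-n)^{1/3}\bigr) < 0$, while $h'(x) = 3(2m-n) - 12x^2(2x-1)$ has a unique zero for $x > 1/2$ because $x^2(2x-1)$ is strictly increasing there. Hence $h$ is unimodal on $(1/2,\infty)$ and, combined with the sign data at the endpoints, has exactly one zero in $\bigl(1,(2m-n)^{1/3}\bigr)$. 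This forces $g$, and therefore $s$, to rise strictly to a unique maximum at $x'$ and fall strictly afterwards, so $x_5$ is uniquely defined in $(1,x']$.

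With $s$ strictly increasing on $(1,x']$, the smallest integer $k \geq 1$ for which $s(k^{1/3}) \geq 0$ is $\lceil x_5^3 \rceil$, and the hypothesis $x_5^3 \leq \lfloor (2m-n)/2 \rfloor$ guarantees that this $k$ lies in the range where Levenshtein's bound applies. Plugging into \autoref{lem:regularity-bounds} yields the claimed $d_{reg} \leq 1 + \lceil x_5^3 \rceil$. The principal technical obstacle is the unimodality argument for $h$ and $g$: without it, neither the location of the global maximum $x'$ in $\bigl(1,(2m-n)^{1/3}\bigr)$ nor the uniqueness of $x_5$ in $(1,x']$ could be pinned down, and the identification of the smallest admissible $k$ with a single explicit quantity would collapse.
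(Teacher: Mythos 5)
Your proposal is correct and follows essentially the same route as the paper: \autoref{lem:regularity-bounds} combined with Levenshtein's bound \eqref{eq:smallest-root-upper-bounds-l}, the substitution $k = x^3$, squaring, and a critical-point analysis of the resulting sextic $s$ on $(1,(2m-n)^{1/3})$. The only divergence is in technique, not substance: where the paper factors $s'(x)=(1-x)\left(6x^4-4x^3-3x(2m-n)+(2m-n)\right)$ and uses the negativity of the quartic factor's discriminant to count its real roots, you reach the same conclusion by an elementary unimodality argument for $h(x)=(3x-1)(2m-n-x^3)-3x^3(x-1)$, which is exactly the negative of that quartic factor.
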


\begin{proof}
	By \autoref{lem:regularity-bounds} and \eqref{eq:smallest-root-upper-bounds-l} from \autoref{lem:smallest-root-upper-bounds} we have
		\begin{align*}
			d_{reg} \leq 1 + \min  \Bigg\{ k : \mbox { }
		m-n \geq \frac{2m-n}{2} - \left( k^{\frac 12}-k^{\frac 16} \right) \sqrt{2m-n-k} \Bigg\} ,
		\end{align*}
		where the minimum is taken over $k=1,\ldots,\lfloor (2m-n)/2 \rfloor$.
		Hence we seek the smallest integer $1 \leq k \leq \lfloor (2m-n)/2 \rfloor$ such that
		\begin{align}
		\label{eq:l-smallest-k}
			 \frac{n}{2} \leq \left( k^{\frac 12}-k^{\frac 16} \right) \sqrt{2m-n-k} = \left( k^{\frac 13}- 1 \right) \sqrt{k^{\frac 13}(2m-n-k)}.
  		\end{align}
  		We do a variable substitution $k \rightarrow x^3$ and square \eqref{eq:l-smallest-k} to obtain
  		\begin{align}
		\label{eq:l-smallest-k-substitute}
			 \tfrac 14 n^2 \leq x(x-1)^2(2m-n-x^3) .
  		\end{align}
  		Therefore we are interested in the roots of the sextic equation
  		\begin{align*}
  			s(x) =  x(x-1)^2(2m-n-x^3) - \tfrac 14 n^2 .
  		\end{align*}
  		The sextic $s$ has a local extremum at $1$ and by Rolle's lemma local extrema inside $(0,1)$ and $(1,(2m-n)^{1/3})$ (Cf.~\autoref{fig:sextic}). We look at the derivative of $s$, that is
		\begin{align}
		\label{eq:l-quartic}
			s'(x) = \Big(1-x \Big)\left(6x^4 -4x^3 -3x(2m-n) + (2m-n) \right) ,
		\end{align}
		The discriminant of the quartic factor $r$ of the derivative $s'$ is
		\[
			\mathrm{Disc}_x(r) = -78732(2m-n)^4 - 39744(2m-n)^3 - 6912(2m-n)^2
		\]
		and hence negative for $m>n$. Therefore $r$ has two complex conjugate roots and two real roots $x'_3 < x'_4$. This shows that the sextic has exactly three local extrema at $1$, $x'_3 \in (0,1)$ and $x'_4 \in (1,(2m-n)^{1/3})$. A second derivative test with some further computations show that $s$ has a local minimum at $1$ and local maxima at $x'_3,x_4'$ if $m>n$. We now focus on the interval $(1,(2m-n)^{1/3})$ since the initial assumption $1 \leq k \leq \lfloor (2m-n)/2 \rfloor$ and variable substitution $k=x^3$ puts the restriction $x \in [1,\lfloor(2m-n)/2\rfloor^{1/3}]$ on those $x$ that we consider valid to satisfy \eqref{eq:l-smallest-k-substitute}. Note that $s(1)=-n^2/4<0$. That is, if $s(x_4') \geq 0$, then by the intermediate value theorem we have a unique real root $x_5 \in (1,x_4']$ that satisfies \eqref{eq:l-smallest-k-substitute}. After undoing the variable substitution our $k = x_5^3$ satisfies \eqref{eq:l-smallest-k} if $x_5^3$ is in the valid range, i.e.~$x_5^3 \leq \lfloor (2m-n)/2 \rfloor$, and consequently $d_{reg} \leq  1 + \left\lceil x_5^3 \right\rceil$.
\end{proof}

	\begin{figure}[ht!]
		\centering
		\includegraphics[scale=0.42]{./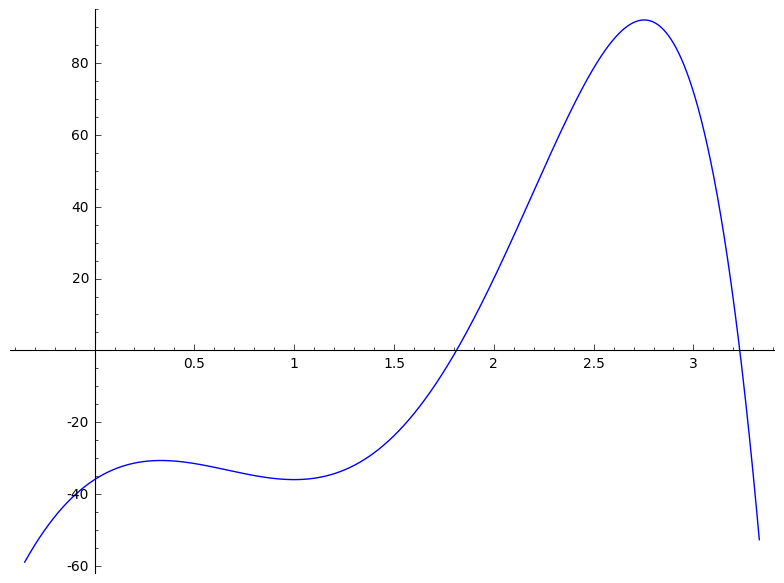}
		\Description{A plot of the sextic $x(x-1)^2(2m-n-x^3) - n^2 / 4$ for $m=24$ and $n=12$. The real roots are $x_5 \approx 1.81$ and $x_6 \approx 3.23$. Now, the root in question is $x_5$ and undoing the variable substitution yields the upper bound $1 + \left\lceil x_5^3 \right\rceil = 7$. The first non-positive coefficient in the expansion of $(1-z)^{12}(1+z)^{24}$ is the coefficient of $z^4$, and $d_{reg} = 4 \leq 7$.}
		\caption{A plot of the sextic $x(x-1)^2(2m-n-x^3) - n^2 / 4$ for $m=24$ and $n=12$. The real roots are $x_5 \approx 1.81$ and $x_6 \approx 3.23$. Now, the root in question is $x_5$ and undoing the variable substitution yields the upper bound $1 + \left\lceil x_5^3 \right\rceil = 7$. The first non-positive coefficient in the expansion of $(1-z)^{12}(1+z)^{24}$ is the coefficient of $z^4$, and $d_{reg} = 4 \leq 7$.}
		\label{fig:sextic}
		% x_5 = 1.8130113209232461
		% x_6 = 3.2321133341432517
	\end{figure}
	
	\begin{rem}
	\label{rem:sextic-factors}
		The sextic of \autoref{thm:upper-bound-regularity-l} turns out to be irreducible
		with full Galois group $\mathfrak{S}_6$ for almost all combinations $m>n$.
		Hence the methods of Hagedorn \citep{Hagedorn2000} for solving a solvable
		sextic are not applicable.
		For almost all remaining combinations $m>n$ it factors into a linear and quintic
		polynomial with full Galois group $\mathfrak{S}_5$. Again, methods for solving a
		solvable quintic \cite{Dummit1991} do not apply. But in some of those cases the
		linear factor coincides with the root that gives our upper bound.
		For concrete instances though, the root $x_5$ can be determined by a numerical
		approximation via a root-finding algorithm.
	\end{rem}
	\begin{rem}
		The conditions for the existence of the upper bound in
		\autoref{thm:upper-bound-regularity-l}, i.e.~$0 \leq \max_{x > 1} (s(x))$ and $x_5^3 \leq \lfloor (2m-n)/2 \rfloor$, can be
		interpreted in complete analogy to \autoref{rem:rausfliegen-family}.
	\end{rem}
	\begin{rem}
		The position of the local maximum $x'$ of the sextic $s$
		in \autoref{thm:upper-bound-regularity-l} can be given
		explicitely by a symbolic computation in SageMath applied to the
		quartic factor in \eqref{eq:l-quartic}.
	\end{rem}

\section{Concrete values and comparisons}
\label{sec:values}

The following is a collection of tables illustrating the lower bounds $\LBKZ$, $\LBLS$ from
\autoref{thm:bound-regularity-kz} and \autoref{thm:bound-regularity-ls}, and the upper bounds 
$\UBLS$, $\UBL$ from \autoref{thm:upper-bound-regularity-ls} and \autoref{thm:upper-bound-regularity-l},
respectively. They are put in contrast to the asymptotic estimates of Bardet et al. \cite[Theorem 1]{BFSY2005}, where we simply omitted the asymptotic term.
Note that the Airy function considered in \cite[(3)]{BFSY2005} which is a solution of the differential equation $y'' - xy = 0$ is not the Airy function considered here in \eqref{eq:smallest-root-lower-bounds-ls} from \autoref{lem:smallest-root-lower-bounds} and \autoref{thm:bound-regularity-ls}.

\begin{center}
\begin{tabular}{|l|*{6}{r}|}
\hline
\multicolumn{7}{|l|}{$m=n+100$} \\
\hline
$n$ & $d_{reg}$ & \cite[(2)]{BFSY2005} & $\LBKZ$ & $\LBLS$ & $\UBLS$ & $\UBL$\\
\hline
256 &  48 & -0.86 & 40 & 44 &  - & 75 \\
512  & 121 & 71.48 & 109 & 103 & - & 184 \\
1024  & 294 & 244.18 & 277 & 228 & - & 448 \\
2048  & 684 & 634.64 & 661 & 485 & - & -\\
4096  & 1534 & 1483.93 &1501 & 1000 & - & -\\
8192  & 3333 & 3282.76 & 3286 & 2029 &  - & -\\
16384  & 7075 & 7024.89 & 7009 & 4084 & - & -\\
32768  & 14766 & 14715.35 & 14672 & 8189 & - & -\\
\hline
\end{tabular}
\end{center}

\begin{center}
\begin{tabular}{|l|*{6}{r}|}
\hline
\multicolumn{7}{|l|}{$m=n+256$} \\
\hline
$n$  & $d_{reg}$ &\cite[(2)]{BFSY2005} & $\LBKZ$ & $\LBLS$ & $\UBLS$ & $\UBL$ \\
\hline
256  & 29 & -95.87 &  22 & 28 & 100 & 46 \\
512  & 79 & -46.95 &  69 & 73 & 492 & 116 \\
1024  & 210 & 83.65 & 196 & 184 & - & 294 \\
2048  & 532 & 405.58 &  513 & 427 & - & 724 \\
4096  & 1277 & 1150.14 & 1249 & 933 & - & 1741 \\
8192  & 2977 & 2794.71 & 2882 & 1957 & - & - \\
16384  & 6442 & 6314.05 & 6385 & 4009 & - & - \\
32768  & 13814 & 13686.09 &  13733 & 8113 & - & - \\
\hline
\end{tabular}
\end{center}

\begin{center}
\begin{tabular}{|l|*{6}{r}|}
\hline
\multicolumn{7}{|l|}{$m=2n$} \\
\hline
$n$  & $d_{reg}$ & \cite[(3)]{BFSY2005} & $\LBKZ$ & $\LBLS$ & $\UBLS$ & $\UBL$ \\
\hline
256 &  29 & 27.10 &  22 & 28 & 100 & 46 \\
512 &  52 & 50.79 &  44 & 51 & 198 & 78 \\
1024 &  98 & 96.87 &  88 & 96 & 393 &139 \\
2048 &  189 & 187.45 & 176 & 184 & 785 & 253 \\
4096 & 368 & 366.58 &  352 & 358 & 1567 & 469 \\
8192 &  724 & 722.29 &  703 & 703 & 3131 & 884 \\
16384 & 1432 & 1430.51 &  1406 & 1391 & 6260 & 1687 \\
32768 &  2844 & 2842.91 &  2812 & 2763 & 12519 & 3249 \\
\hline
\end{tabular}
\end{center}

\begin{center}
\begin{tabular}{|l|*{6}{r}|}
\hline
\multicolumn{7}{|l|}{$m=8n$} \\
\hline
$n$  & $d_{reg}$ & \cite[(3)]{BFSY2005} & $\LBKZ$ & $\LBLS$ & $\UBLS$ & $\UBL$ \\
\hline
256 & 8 & 6.57 &  5 & 8 & 20 & 14 \\
512 & 14 & 11.83 &  9 & 14 & 37 & 23 \\
1024  & 23 & 21.61 &  18 & 23 & 71 & 37 \\
2048  & 42 & 40.26 &  35 & 42 & 140 & 63 \\
4096  & 78 & 76.41 &  69 & 78 & 277 & 111 \\
8192  & 149 & 147.23 &  137 & 149 & 551 & 201 \\
16384 & 289 & 287.05 &  274 & 288 & 1100 & 371 \\
32768 & 566 & 564.37 &  547 & 565 & 2197 & 696 \\
\hline
\end{tabular}
\end{center}

\begin{center}
\begin{tabular}{|l|*{6}{r}|}
\hline
 \multicolumn{7}{|l|}{$m=n \log_2(n)$} \\
\hline
$n$  & $d_{reg}$ & \cite{BFSY2005} & $\LBKZ$ & $\LBLS$ & $\UBLS$ & $\UBL$ \\
\hline
256 &  8 & - &  5 & 8 & 20 & 14 \\
512 &  12 & - &  8 & 12 & 33 & 21 \\
1024 &  19 & - &  14 & 19 & 57 & 31 \\
2048 &  31 & - &  25 & 31 & 100 & 48 \\
4096 &  53 & - &  45 & 53 & 181 & 78 \\
8192 &  92 & - &  82 & 92 & 331 & 129 \\
16384 &  164 & - &  152 & 164 & 610 & 220 \\
32768 &  298 & - & 283  & 298 & 1134 & 382 \\
\hline
\end{tabular}
\end{center}

\section{Acknowledgements}

I would like to thank Max Gebhardt, Jernej Tonejc and Andreas Wiemers for helpful discussions.

\bibliographystyle{ACM-Reference-Format}
\bibliography{krawtchouk}

%%% -*-BibTeX-*-
%%% Do NOT edit. File created by BibTeX with style
%%% ACM-Reference-Format-Journals [18-Jan-2012].

\begin{thebibliography}{20}

%%% ====================================================================
%%% NOTE TO THE USER: you can override these defaults by providing
%%% customized versions of any of these macros before the \bibliography
%%% command.  Each of them MUST provide its own final punctuation,
%%% except for \shownote{}, \showDOI{}, and \showURL{}.  The latter two
%%% do not use final punctuation, in order to avoid confusing it with
%%% the Web address.
%%%
%%% To suppress output of a particular field, define its macro to expand
%%% to an empty string, or better, \unskip, like this:
%%%
%%% \newcommand{\showDOI}[1]{\unskip}   % LaTeX syntax
%%%
%%% \def \showDOI #1{\unskip}           % plain TeX syntax
%%%
%%% ====================================================================

\ifx \showCODEN    \undefined \def \showCODEN     #1{\unskip}     \fi
\ifx \showDOI      \undefined \def \showDOI       #1{#1}\fi
\ifx \showISBNx    \undefined \def \showISBNx     #1{\unskip}     \fi
\ifx \showISBNxiii \undefined \def \showISBNxiii  #1{\unskip}     \fi
\ifx \showISSN     \undefined \def \showISSN      #1{\unskip}     \fi
\ifx \showLCCN     \undefined \def \showLCCN      #1{\unskip}     \fi
\ifx \shownote     \undefined \def \shownote      #1{#1}          \fi
\ifx \showarticletitle \undefined \def \showarticletitle #1{#1}   \fi
\ifx \showURL      \undefined \def \showURL       {\relax}        \fi
% The following commands are used for tagged output and should be
% invisible to TeX
\providecommand\bibfield[2]{#2}
\providecommand\bibinfo[2]{#2}
\providecommand\natexlab[1]{#1}
\providecommand\showeprint[2][]{arXiv:#2}

\bibitem[\protect\citeauthoryear{Area, Dimitrov, Godoy, and Paschoa}{Area
  et~al\mbox{.}}{2013}]%
        {paschoaetal}
\bibfield{author}{\bibinfo{person}{Iv{\'a}n Area}, \bibinfo{person}{Dimitar~K.
  Dimitrov}, \bibinfo{person}{Eduardo Godoy}, {and} \bibinfo{person}{Vanessa
  Paschoa}.} \bibinfo{year}{2013}\natexlab{}.
\newblock \showarticletitle{Zeros of classical orthogonal polynomials of a
  discrete variable}.
\newblock \bibinfo{journal}{\emph{Math. Comp.}}  \bibinfo{volume}{82}
  (\bibinfo{year}{2013}), \bibinfo{pages}{1069--1095}.
\newblock


\bibitem[\protect\citeauthoryear{Area, Dimitrov, Godoy, and Paschoa}{Area
  et~al\mbox{.}}{2015}]%
        {Area2015}
\bibfield{author}{\bibinfo{person}{Iv{\'a}n Area}, \bibinfo{person}{Dimitar~K.
  Dimitrov}, \bibinfo{person}{Eduardo Godoy}, {and} \bibinfo{person}{Vanessa
  Paschoa}.} \bibinfo{year}{2015}\natexlab{}.
\newblock \showarticletitle{Bounds for the zeros of symmetric {K}ravchuk
  polynomials}.
\newblock \bibinfo{journal}{\emph{Numerical Algorithms}} \bibinfo{volume}{69},
  \bibinfo{number}{3} (\bibinfo{year}{2015}), \bibinfo{pages}{611--624}.
\newblock


\bibitem[\protect\citeauthoryear{Bardet}{Bardet}{2004}]%
        {Bardet2004}
\bibfield{author}{\bibinfo{person}{Magali Bardet}.}
  \bibinfo{year}{2004}\natexlab{}.
\newblock \emph{\bibinfo{title}{{{\'E}tude des syst{\`e}mes alg{\'e}briques
  surd{\'e}termin{\'e}s. Applications aux codes correcteurs et {\`a} la
  cryptographie}}}.
\newblock \bibinfo{thesistype}{Ph.D. Dissertation}.
  \bibinfo{school}{{Universit{\'e} Pierre et Marie Curie - Paris VI}}.
\newblock


\bibitem[\protect\citeauthoryear{Bardet, Faug\`ere, and Salvy}{Bardet
  et~al\mbox{.}}{2003}]%
        {BFS2003}
\bibfield{author}{\bibinfo{person}{Magali Bardet},
  \bibinfo{person}{Jean-Charles Faug\`ere}, {and} \bibinfo{person}{Bruno
  Salvy}.} \bibinfo{year}{2003}\natexlab{}.
\newblock \showarticletitle{Semi-regular overdetermined sequences over
  $\mathbf{F}_2$ with solutions in $\mathbf{F}_2$}.
\newblock \bibinfo{journal}{\emph{INRIA, Research Report}}
  (\bibinfo{year}{2003}).
\newblock


\bibitem[\protect\citeauthoryear{Bardet, Faug{\`e}re, and Salvy}{Bardet
  et~al\mbox{.}}{2004}]%
        {BFS2004}
\bibfield{author}{\bibinfo{person}{Magali Bardet},
  \bibinfo{person}{Jean-Charles Faug{\`e}re}, {and} \bibinfo{person}{Bruno
  Salvy}.} \bibinfo{year}{2004}\natexlab{}.
\newblock \showarticletitle{On the complexity of {G}r{\"o}bner basis
  computation of semi-regular overdetermined algebraic equations}. In
  \bibinfo{booktitle}{\emph{Proceedings of ICPSS 2004, International Conference
  on Polynomial System Solving}}.
\newblock


\bibitem[\protect\citeauthoryear{Bardet, Faug\`ere, Salvy, and Yang}{Bardet
  et~al\mbox{.}}{2005}]%
        {BFSY2005}
\bibfield{author}{\bibinfo{person}{Magali Bardet},
  \bibinfo{person}{Jean-Charles Faug\`ere}, \bibinfo{person}{Bruno Salvy},
  {and} \bibinfo{person}{Bo-Yin Yang}.} \bibinfo{year}{2005}\natexlab{}.
\newblock \showarticletitle{Asymptotic behaviour of the degree of regularity of
  semi-regular polynomial systems}. In \bibinfo{booktitle}{\emph{Proceedings of
  MEGA 2005, The Eighth International Symposium on Effective Methods in
  Algebraic Geometry}}.
\newblock


\bibitem[\protect\citeauthoryear{Diem}{Diem}{2015}]%
        {Diem2015}
\bibfield{author}{\bibinfo{person}{Claus Diem}.}
  \bibinfo{year}{2015}\natexlab{}.
\newblock \showarticletitle{Bounded regularity}.
\newblock \bibinfo{journal}{\emph{Journal of Algebra}}  \bibinfo{volume}{423}
  (\bibinfo{year}{2015}), \bibinfo{pages}{1143--1160}.
\newblock


\bibitem[\protect\citeauthoryear{Dummit}{Dummit}{1991}]%
        {Dummit1991}
\bibfield{author}{\bibinfo{person}{David~S. Dummit}.}
  \bibinfo{year}{1991}\natexlab{}.
\newblock \showarticletitle{Solving solvable quintics}.
\newblock \bibinfo{journal}{\emph{{M}athematics of {C}omputation}}
  \bibinfo{volume}{57}, \bibinfo{number}{195} (\bibinfo{year}{1991}),
  \bibinfo{pages}{387--401}.
\newblock


\bibitem[\protect\citeauthoryear{Faug\`{e}re}{Faug\`{e}re}{2002}]%
        {Faugere2002}
\bibfield{author}{\bibinfo{person}{Jean~Charles Faug\`{e}re}.}
  \bibinfo{year}{2002}\natexlab{}.
\newblock \showarticletitle{A new efficient algorithm for computing
  {G}r\"{o}bner bases without reduction to zero ({F}5)}. In
  \bibinfo{booktitle}{\emph{Proceedings of the 2002 International Symposium on
  Symbolic and Algebraic Computation}} \emph{(\bibinfo{series}{ISSAC '02})}.
  \bibinfo{pages}{75--83}.
\newblock


\bibitem[\protect\citeauthoryear{Golub and Kahan}{Golub and Kahan}{1965}]%
        {GB1965}
\bibfield{author}{\bibinfo{person}{Gene Golub} {and} \bibinfo{person}{William
  Kahan}.} \bibinfo{year}{1965}\natexlab{}.
\newblock \showarticletitle{Calculating the singular values and pseudo-inverse
  of a matrix}.
\newblock \bibinfo{journal}{\emph{Journal of the Society for Industrial and
  Applied Mathematics, Series B: Numerical Analysis}} \bibinfo{volume}{2},
  \bibinfo{number}{2} (\bibinfo{year}{1965}), \bibinfo{pages}{205--224}.
\newblock


\bibitem[\protect\citeauthoryear{Hagedorn}{Hagedorn}{2000}]%
        {Hagedorn2000}
\bibfield{author}{\bibinfo{person}{Thomas~R. Hagedorn}.}
  \bibinfo{year}{2000}\natexlab{}.
\newblock \showarticletitle{General formulas for solving solvable sextic
  equations}.
\newblock \bibinfo{journal}{\emph{Journal of Algebra}} \bibinfo{volume}{233},
  \bibinfo{number}{2} (\bibinfo{year}{2000}), \bibinfo{pages}{704--757}.
\newblock


\bibitem[\protect\citeauthoryear{Hodges, Molina, and Schlather}{Hodges
  et~al\mbox{.}}{2017}]%
        {HMS2017}
\bibfield{author}{\bibinfo{person}{Timothy~J. Hodges}, \bibinfo{person}{Sergio
  Molina}, {and} \bibinfo{person}{Jacob Schlather}.}
  \bibinfo{year}{2017}\natexlab{}.
\newblock \showarticletitle{On the existence of homogeneous semi-regular
  sequences in $\mathbf{F}_2 [X_1,\ldots,X_n]/(X_1^2,\ldots,X_n^2)$}.
\newblock \bibinfo{journal}{\emph{Journal of Algebra}}  \bibinfo{volume}{476}
  (\bibinfo{year}{2017}), \bibinfo{pages}{519--547}.
\newblock


\bibitem[\protect\citeauthoryear{Jooste and Jordaan}{Jooste and
  Jordaan}{2014}]%
        {jooste_jordaan_2014}
\bibfield{author}{\bibinfo{person}{Alta Jooste} {and} \bibinfo{person}{Kerstin
  Jordaan}.} \bibinfo{year}{2014}\natexlab{}.
\newblock \showarticletitle{Bounds for zeros of {M}eixner and {K}ravchuk
  polynomials}.
\newblock \bibinfo{journal}{\emph{LMS Journal of Computation and Mathematics}}
  \bibinfo{volume}{17}, \bibinfo{number}{1} (\bibinfo{year}{2014}),
  \bibinfo{pages}{47--57}.
\newblock


\bibitem[\protect\citeauthoryear{Kouachi}{Kouachi}{2006}]%
        {Kouachi2006}
\bibfield{author}{\bibinfo{person}{Said Kouachi}.}
  \bibinfo{year}{2006}\natexlab{}.
\newblock \showarticletitle{Eigenvalues and eigenvectors of tridiagonal
  matrices}.
\newblock \bibinfo{journal}{\emph{ELA. The Electronic Journal of Linear
  Algebra}}  \bibinfo{volume}{15} (\bibinfo{year}{2006}).
\newblock


\bibitem[\protect\citeauthoryear{Krasikov and Litsyn}{Krasikov and
  Litsyn}{2001}]%
        {Krasikov1999SurveyOB}
\bibfield{author}{\bibinfo{person}{Ilia Krasikov} {and} \bibinfo{person}{Simon
  Litsyn}.} \bibinfo{year}{2001}\natexlab{}.
\newblock \showarticletitle{Survey of binary Krawtchouk polynomials}. In
  \bibinfo{booktitle}{\emph{Proceedings of Codes and Association Schemes 1999.
  DIMACS Series in Discrete Mathematics and Theoretical Computer Science}},
  Vol.~\bibinfo{volume}{56}.
\newblock


\bibitem[\protect\citeauthoryear{Krasikov and Zarkh}{Krasikov and
  Zarkh}{2009}]%
        {kz2009}
\bibfield{author}{\bibinfo{person}{Ilia Krasikov} {and}
  \bibinfo{person}{Alexander Zarkh}.} \bibinfo{year}{2009}\natexlab{}.
\newblock \showarticletitle{On zeros of discrete orthogonal polynomials}.
\newblock \bibinfo{journal}{\emph{Journal of Approximation Theory}}
  \bibinfo{volume}{156}, \bibinfo{number}{2} (\bibinfo{year}{2009}),
  \bibinfo{pages}{121--141}.
\newblock


\bibitem[\protect\citeauthoryear{Krawtchouk}{Krawtchouk}{1929}]%
        {Krawtchouk1929}
\bibfield{author}{\bibinfo{person}{Mikhailo Krawtchouk}.}
  \bibinfo{year}{1929}\natexlab{}.
\newblock \showarticletitle{{Sur une g\'en\'eralisation des polyn\^omes
  d'Hermite.}}
\newblock \bibinfo{journal}{\emph{{Comptes Rendus Hebdomadaires des S\'eances
  de l'Acad\'emie des Sciences, Paris}}}  \bibinfo{volume}{189}
  (\bibinfo{year}{1929}), \bibinfo{pages}{620--622}.
\newblock


\bibitem[\protect\citeauthoryear{Levenshtein}{Levenshtein}{1983}]%
        {Levenshtein1983}
\bibfield{author}{\bibinfo{person}{Vladimir Levenshtein}.}
  \bibinfo{year}{1983}\natexlab{}.
\newblock \showarticletitle{Bounds for packings of metric spaces and some their
  applications}.
\newblock \bibinfo{journal}{\emph{Problemi Kybernetiki}}  \bibinfo{volume}{40}
  (\bibinfo{year}{1983}), \bibinfo{pages}{43--110}.
\newblock


\bibitem[\protect\citeauthoryear{Levenshtein}{Levenshtein}{1995}]%
        {Levenshtein1995}
\bibfield{author}{\bibinfo{person}{Vladimir Levenshtein}.}
  \bibinfo{year}{1995}\natexlab{}.
\newblock \showarticletitle{Krawtchouk polynomials and universal bounds for
  codes and designs in {H}amming spaces}.
\newblock \bibinfo{journal}{\emph{IEEE Transactions on Information Theory}}
  \bibinfo{volume}{41}, \bibinfo{number}{5} (\bibinfo{year}{1995}),
  \bibinfo{pages}{1303--1321}.
\newblock


\bibitem[\protect\citeauthoryear{Szeg{\H o}}{Szeg{\H o}}{1975}]%
        {s1975}
\bibfield{author}{\bibinfo{person}{G{\'a}bor Szeg{\H o}}.}
  \bibinfo{year}{1975}\natexlab{}.
\newblock \bibinfo{booktitle}{\emph{Orthogonal polynomials}
  (\bibinfo{edition}{4} ed.)}.
\newblock Number~23 in \bibinfo{series}{American Mathematical Society
  colloquium publications}.
\newblock


\end{thebibliography}

\end{document}